\DeclareSymbolFont{SY}{U}{psy}{m}{n}
\DeclareMathSymbol{\emptyset}{\mathord}{SY}{'306}
\renewcommand{\eqref}[1]{{\rm(\ref{#1})}}
\newcommand{\bbR}{{\mathbb R}}
\newcommand{\bbN}{{\mathbb N}}
\newcommand{\cB}{{\mathcal B}}
\newcommand{\cG}{{\mathcal G}}
\newcommand{\cF}{{\mathcal F}}
\newcommand{\cO}{{\mathcal O}}
\newcommand{\cS}{{\mathcal S}}
\newcommand{\cU}{{\mathcal U}}
\newcommand{\re}{{\rm e}}
\newcommand{\sE}{{\sf E}}
\newcommand{\vk}{\varkappa}
\newcommand{\fA}{\mathfrak{A}}
\newcommand{\fH}{\mathfrak{H}}
\newcommand{\fL}{\mathfrak{L}}
\newcommand{\fP}{\mathfrak{P}}
\newcommand{\fQ}{\mathfrak{Q}}
\newcommand{\fR}{\mathfrak{R}}
\newcommand{\diag}{\mathop{\rm diag}}
\newcommand{\dist}{\mathop{\rm dist}}
\newcommand{\Real}{\mathop{\rm Re}}
\newcommand{\be}{\begin{equation}}
\newcommand{\ee}{\end{equation}}
 \DeclareMathOperator{\spec}{spec}
\newcommand{\ran}{\mathop{\mathrm{Ran}}}
\newcommand{\Ran}{\mathop{\mathrm{Ran}}}
\newcommand{\dom}{\mathop{\mathrm{Dom}}}
\newcommand{\Dom}{\mathop{\mathrm{Dom}}}
\numberwithin{equation}{section}
\newtheorem{theorem}{Theorem}[section]
\newtheorem{corollary}[theorem]{Corollary}
\newtheorem{lemma}[theorem]{Lemma}
\newtheorem{proposition}[theorem]{Proposition}
\theoremstyle{definition}
\newtheorem{definition}[theorem]{Definition}
\theoremstyle{remark}
{\it}{\rm}
\newtheorem{remark}[theorem]{Remark}
\begin{document}
\vspace*{-0cm}

\title[Sharpening the norm bound]
{Sharpening the Norm Bound in the Subspace \\ Perturbation
Theory$^*$\footnote{\normalsize$^*$arXiv:1112.0149;\\
\phantom{$^*$}\textit{Complex Analysis and Operator Theory}, DOI:
10.1007/s11785-012-0245-7;\\
\phantom{$^*$$^*$$^*$}the journal version is available online from\\
\phantom{$^*$$^*$$^*$}http://dx.doi.org/10.1007/s11785-012-0245-7}}

\author[S. Albeverio]{Sergio Albeverio}
\address{%
Institut f\"ur Angewandte Mathematik and HCM\\
Uni\-ver\-si\-t\"at Bonn\\
Endenicher Allee 60 \\
D-53115 Bonn, Germany\\
webpage: http://wiener.iam.uni-bonn.de/\~{}albeverio}
\email{\rm albeverio@uni-bonn.de}

\author[A. K. Motovilov]{Alexander K. Motovilov}
\address{Bogoliubov Laboratory of
Theoretical Physics\\
JINR, Joliot-Cu\-rie 6\\
141980 Dubna, Moscow
Region, Russia\\
webpage: http://theor.jinr.ru/\~{}motovilv}
\email{\rm motovilv@theor.jinr.ru}


\subjclass{47A15, 47A62, 47B15}

\keywords{Self-adjoint operator, subspace perturbation problem, subspace perturbation
bound, direct rotation, maximal angle between subspaces, operator
angle, Riccati equation, quantum harmonic oscillator}

\begin{abstract}
Let $A$ be a (possibly unbounded) self-adjoint operator on a
separable Hil\-bert space $\fH$. Assume that $\sigma$ is
an isolated component of the spectrum of $A$, that is,
$\dist(\sigma,\Sigma)=d>0$ where $\Sigma=\spec(A)\setminus\sigma$.
Suppose that $V$ is a bounded self-adjoint operator on $\fH$
such that $\|V\|<d/2$ and let $L=A+V$, $\Dom(L)=\Dom(A)$.
Denote by $P$ the spectral projection of $A$ associated
with the spectral set $\sigma$ and let $Q$ be the spectral
projection of $L$ corresponding to the closed $\|V\|$-neighborhood
of $\sigma$. Introducing the sequence
$$
\vk_n=\frac{1}{2}\left(1-\frac{(\pi^2-4)^n}{(\pi^2+4)^n}\right),
\quad n\in\{0\}\cup\bbN,
$$
we prove that the following bound holds:
$$
\arcsin(\|P-Q\|)\leq M_\star\left(\frac{\|V\|}{d}\right),
$$
where the estimating function $M_\star(x)$,
$x\in\bigl[0,\frac{1}{2}\bigr)$, is given by
$$
M_\star(x)=\frac{1}{2}\,\,n_{_\#}(x)\,\arcsin\left(\frac{4\pi}{\pi^2+4}\right)
+\frac{1}{2}\,\arcsin\left(\frac{\pi( x-\vk_{n_{_\#}(x)})}
{1-2\vk_{n_{_\#}(x)})}\right),
$$
with $n_{_\#}(x)=\max\left\{n\,\bigr|\,\,n\in\{0\}\cup\bbN\,,
\varkappa_n\leq x\right\}$. The bound obtained is essentially
stronger than the previously known estimates for $\|P-Q\|$.
Furthermore, this bound ensures that \mbox{$\|P-Q\|<1$} and, thus,
that the spectral subspaces $\Ran(P)$ and $\Ran(Q)$ are in the
acute-angle case whenever \mbox{$\|V\|<c_\star\,d$}, where
$$
c_\star=16\,\,\frac{\pi^6-2\pi^4+32\pi^2-32}{(\pi^2+4)^4}=0.454169\ldots\,.
$$
Our proof of the above results is based on using the triangle
inequality for the maximal angle between subspaces and on employing
the a priori generic $\sin2\theta$ estimate for the variation of a
spectral subspace. As an example, the boundedly perturbed quantum
harmonic oscillator is discussed.
\end{abstract}

\maketitle

\section{Introduction}
\label{SIntro}

One of fundamental problems of operator perturbation theory is to
study variation of the spectral subspace corresponding to a
subset of the spectrum of a closed linear operator that is
subject to a perturbation. This is an especially important issue
in perturbation theory of self-adjoint operators.

Assume that $A$ is a self-adjoint operator on a separable Hilbert
space $\fH$. It is well known (see, e.g., \cite{Kato}) that if $V$
is a bounded self-adjoint perturbation of $A$ then the spectrum,
$\spec(L)$, of the perturbed operator $L=A+V$, $\Dom(L)=\Dom(A)$,
lies in the closed $\|V\|$-neighborhood
$\cO_{\|V\|}\bigl(\spec(A)\bigr)$ of the spectrum of $A$. Hence,
if the spectrum of $A$ has an isolated component
$\sigma$ separated from its complement
$\Sigma=\spec(A)\setminus\sigma$  by gaps of length greater than or
equal to $d>0$, that is, if
\begin{equation}
\label{separIn}
\dist(\sigma,\Sigma)=d>0,
\end{equation}
then the spectrum of $L$ will also consist of two disjoint
components, $\omega=\spec(L)\cap\cO_{\|V\|}(\sigma)$ and
$\Omega=\spec(L)\cap\cO_{\|V\|}(\Sigma)$,
provided that
\begin{equation}
\label{Vd12In}
 \|V\|<{d}/{2}.
\end{equation}
Under condition \eqref{Vd12In} one may think of the separated
spectral components $\omega$ and $\Omega$ of the perturbed operator
$L$ as the result of the perturbation of the initial disjoint
spectral sets $\sigma$ and $\Sigma$, respectively. Clearly, this
condition is sharp in the sense that if $\|V\|>d/2$, the
perturbed operator $L$ may not have separated parts of the spectrum
at all.

Assuming \eqref{Vd12In}, let $P=\sE_A(\sigma)$ and $Q=\sE_L(\omega)$
be the spectral projections of the (self-adjoint) operators $A$ and
$L$ associated with the unperturbed and perturbed isolated spectral
sets $\sigma$ and $\omega$, respectively. A still unsolved problem
is to answer the following fundamental question:
\begin{enumerate}
\item[\it(i)] {\it Is it true
that under the single spectral condition \eqref{separIn} the
perturbation bound \eqref{Vd12In} necessarily implies
\begin{equation}
\label{PQless1}
\|P-Q\|<1\,?
\end{equation}
}
\end{enumerate}
Our guess is that the answer to the question (i) should be
positive, but at the moment this is only a conjecture.

Notice that the quantity $\theta=\arcsin\bigl(\|P-Q\|\bigr)$
expresses the maximal angle between the ranges $\Ran(P)$ and
$\Ran(Q)$ of the orthogonal projections $P$ and $Q$ (see Definition
\ref{D-maxangle} and Remark \ref{R-maxangle} below). If
$\theta<\pi/2$, the subspaces $\Ran(P)$ and $\Ran(Q)$ are said to be
in the acute-angle case. Thus, there is an equivalent geometric
formulation of the question (i): Does the perturbation bound
\eqref{Vd12In} together with the single spectral condition
\eqref{separIn} always imply that the spectral subspaces of $A$ and
$L$ associated with the respective unperturbed and perturbed
spectral sets $\sigma$ and $\omega$ are in the acute-angle case?

Furthermore, provided it is established that \eqref{PQless1} holds,
at least for
\begin{equation}
\label{Vdcd}
\|V\| < c\,d
\end{equation}
with some constant $c<1/2$, another important question arises:
\begin{enumerate}
\item[\it(ii)] {\it What function $M(x)$, $x\in[0,\,c)$,
is best possible
in the bound}
\begin{equation}
\label{Mquest}
\arcsin\bigl(\|P-Q\|\bigr)\leq M\left(\frac{\|V\|}{d}\right)\,?
\end{equation}
\end{enumerate}
The estimating function $M$ in \eqref{Mquest} is
required to be universal in the sense that it should be the same for
all self-adjoint $A$ and $V$ for which the conditions
\eqref{separIn} and \eqref{Vdcd} hold.

Note that if one adds to \eqref{separIn} an extra assumption that
one of the sets $\sigma$ and $\Sigma$ lies in a finite or infinite
gap of the other set, say, $\sigma$ lies in a gap of $\Sigma$, the
answer to the question (i) is known to be positive and the optimal
function $M$ in the bound \eqref{Mquest} is given by
$M(x)=\frac{1}{2}\arcsin(2x)$, $x\in\bigl[0,\frac{1}{2}\bigr)$. This
is the essence of the Davis-Kahan sin\,2$\Theta$ Theorem
\cite{DK70}. For the same particular mutual positions of the
spectral sets $\sigma$ and $\Sigma$ the positive answer to the
question (i) and complete answers to the question (ii) have also
been given (under  conditions on $\|V\|$ much weaker than
\eqref{Vd12In}) in the case where the perturbation $V$ is
off-diagonal with respect to the partition
\mbox{$\spec(A)=\sigma\cup\Sigma$} (see the $\tan2\Theta$ Theorem in
\cite{DK70} and the a priori $\tan\Theta$ Theorem in
\cite{AM2010,MotSel}; cf. the extensions of the $\tan2\Theta$
Theorem in \cite{GKMV2010,KMM5,MotSel}).

As for the general case where no requirements are imposed on the
spectral sets $\sigma$ and $\Sigma$ except for the separation
condition \eqref{separIn} and no assumptions are made on the
structure of the perturbation $V$, we are only aware of the partial
answers to the questions (i) and (ii) found in \cite{KMM1} and
\cite{MakS10}. We underline that both \cite{KMM1} and \cite{MakS10}
only treat the case where the unperturbed operator $A$ is bounded.
In \cite{KMM1} it was proven that inequality \eqref{PQless1} holds
true whenever \mbox{$\|V\|<c_{_{\rm KMM}}d$} with
\begin{align}
\label{cKMM}
c_{_{\rm KMM}}=&\frac{2}{2+\pi}=0.388984\ldots\,.
\end{align}
In \cite{MakS10} the value of $c$ in the bound \eqref{Vdcd} ensuring
\eqref{PQless1} has been raised to
\begin{align}
\label{cMS}
c_{_{\rm MS}}&=\frac{1}{2}-\frac{1}{2\re^2}=0.432332\ldots\,.
\end{align}
Explicit expressions for the corresponding estimating functions
$M$ found in \cite{MakS10} and \cite{KMM1} are given below in
Remarks \ref{R-MSbound} and \ref{R-KMMbound}, respectively. The
bound of the form \eqref{Mquest} established in  \cite{MakS10}
is stronger than its predecessor in \cite{KMM1}.

In the present work, the requirement that the operator $A$ should be bounded is
withdrawn. Introducing the sequence
$$
\vk_n=\frac{1}{2}\left(1-\frac{(\pi^2-4)^n}{(\pi^2+4)^n}\right),
\quad n\in\{0\}\cup\bbN,
$$
we prove that under conditions \eqref{separIn} and \eqref{Vd12In}
the following estimate of the form \eqref{Mquest} holds:
\begin{equation}
\label{PQMstar}
\arcsin(\|P-Q\|)\leq M_\star\left(\frac{\|V\|}{d}\right),
\end{equation}
where the estimating function $M_\star(x)$, $x\in\bigl[0,\frac{1}{2}\bigr)$, is given by
\begin{equation}
\label{MstarIn}
M_\star(x)=\frac{1}{2}\,\,n_{_\#}(x)\,\arcsin\left(\frac{4\pi}{\pi^2+4}\right)
+\frac{1}{2}\,\arcsin\left(\frac{\pi( x-\vk_{n_{_\#}(x)})}
{1-2\vk_{n_{_\#}(x)})}\right),
\end{equation}
with $n_{_\#}(x)=\max\left\{n\,\bigr|\,\,n\in\{0\}\cup\bbN\,,
\varkappa_n\leq x\right\}$. The estimate \eqref{PQMstar} is sharper
than the best previously known bound for $\|P-Q\|$ from
\cite{MakS10} (see Remark \ref{R-final} for details). Furthermore,
this estimate implies that \mbox{\,$\|P-Q\|<1$\,} and, thus,
that the spectral subspaces $\Ran(P)$ and $\Ran(Q)$ are in the
acute-angle case whenever \mbox{$\|V\|<c_\star d$}, where the constant
\begin{equation}
\label{cstarIn}
c_\star=16\,\,\frac{\pi^6-2\pi^4+32\pi^2-32}{(\pi^2+4)^4}=0.454169\ldots\,.
\end{equation}
is larger (and, hence, closer to the desired $1/2$) than the best
previous constant \eqref{cMS}.

The plan of the paper is as follows. In Section \ref{SecOR} we
recall the notion of maximal angle between subspaces of a Hilbert
space and recollect necessary definitions and facts on pairs of
subspaces. In Section \ref{S-MSext} we extend the best previously
known subspace perturbation bound (from \cite{MakS10}) to the case
where the unperturbed operator $A$ is already allowed to be
unbounded. Note that the extended bound is later on used in the
proof of the estimate \eqref{PQMstar}. Sec\-tion~\ref{S-Sin2T} is
devoted to deriving two new estimates for the variation of a
spectral subspace that we call the a priori and a posteriori generic
$\sin2\theta$ bounds. These respective estimates involve the maximal
angle between a reducing subspace of the perturbed operator $L$ and
a spectral subspace of the unperturbed operator $A$ or vice versa.
The principal result of the present work, the estimate \eqref{PQMstar},
is proven in Section \ref{S-StarBound} (see Theorem \ref{ThMain}).
Under the assumption that $V\neq0$, the proof is
performed by multiply employing the a priori generic $\sin2\theta$
bound and using, step by step, the triangle inequality for the
maximal angles between the corresponding spectral subspaces of the
operator $A$ and two consecutive intermediate operators $L_j=A+t_j
V$, where $t_j=\vk_j d/\|V\|$, $j\in\{0\}\cup\bbN$. Finally, in
Sec\-ti\-on~\ref{SecExHO} we apply the bound \eqref{PQMstar} to the
Schr\"odinger operator describing a boundedly perturbed
$N$-dimensional isotropic quantum harmonic oscillator.

The following notations are used thro\-ug\-h\-o\-ut the paper. By a
subspace of a Hilbert space we always mean a closed linear subset.
The identity operator on a subspace (or on the whole Hilbert space)
$\fP$ is denoted by $I_\fP$; if no confusion arises, the index $\fP$
is often omitted.  The Banach space of bounded linear operators from
a Hilbert space $\fP$ to a Hilbert space $\fQ$ is denoted by
$\cB(\fP,\fQ)$ and by $\cB(\fP)$ if $\fQ=\fP$. If $P$ is an
orthogonal projection in a Hilbert space $\fH$ onto the subspace
$\fP$, by $P^\perp$ we denote the orthogonal projection onto the
orthogonal complement $\fP^\perp:=\fH\ominus\fP$ of the subspace
$\fP$. The notation $\sE_T(\sigma)$ is used for the spectral projection
of a self-adjoint operator $T$ associated with a Borel set
$\sigma\subset\bbR$. By $\cO_r(\sigma)$, $r\geq 0$, we denote the
closed $r$-neigh\-bourhood of $\sigma$ in $\bbR$, i.e.\
$\cO_r(\sigma)=\{x\in\bbR\big|\,\dist(x,\sigma)\leq r\}$.

\section{Preliminaries}
\label{SecOR}

The main purpose of this section is to recollect relevant facts on
a pair of subspaces and the maximal angle between
them.

It is well known that if $\fH$ is a Hilbert space then $\|P-Q\|\leq
1$ for any two orthogonal projections $P$ and $Q$ in $\fH$ (see,
e.g., \cite[Section 34]{AkhiG}). We start with the following
definition.

\begin{definition}
\label{D-maxangle}
Let $\fP$ and $\fQ$ be subspaces of the Hilbert space $\fH$ and
$P$ and $Q$ the orthogonal projections in $\fH$
with $\Ran(P)=\fP$ and $\Ran(Q)=\fQ$. The quantity
$$
\theta(\fP,\fQ):=\arcsin(\|P-Q\|)
$$
is called the \textit{maximal angle} between the subspaces $\fP$ and $\fQ$.
\end{definition}
\begin{remark}
\label{R-maxangle}
The concept of maximal angle between subspaces is traced back to
M.G.\,Krein, M.\,A.\,Krasnoselsky, and D.\,P.\,Milman
\cite{Krein:Krasnoselsky:Milman}. Assuming that $(\fP,\fQ)$ is an
ordered pair of subspaces with $\fP\neq\{0\}$, they applied the
notion of the (relative) maximal angle between $\fP$ and $\fQ$ to the number
$\varphi$ in $\left[0,\text{$\frac{\pi}{2}$}\right]$
introduced by
$$
\sin\varphi(\fP,\fQ)=\sup\limits_{x\in\fP,\,\|x\|=1}\dist(x,\fQ).
$$
If both $\fP\neq\{0\}$ and $\fQ\neq\{0\}$ then
$$
\theta(\fP,\fQ)=\max\bigl\{\varphi(\fP,\fQ),
\varphi(\fQ,\fP)\bigr\}
$$
(see, e.g., \cite[Example 3.5]{BoetSpit}) and, in general,
$\varphi(\fP,\fQ)\neq \varphi(\fQ,\fP)$. Unlike
$\varphi(\fP,\fQ)$, the maximal angle $\theta(\fP,\fQ)$ is always symmetric
with respect to the interchange of the arguments $\fP$ and $\fQ$.
Furthermore,
$$
\varphi(\fQ,\fP)=\varphi(\fP,\fQ)=\theta(\fQ,\fP)\quad
\text{whenever  }\|P-Q\|<1.
$$
\end{remark}

\begin{remark}
The distance function $d(\fP,\fQ)=\|P-Q\|$ is a natural metric on
the set $\cS(\fH)$ of all subspaces of the Hilbert space
$\fH$. Clearly, for the maximal angle $\theta(\fP,\fQ)$ to be another metric on
$\cS(\fH)$, only the triangle inequality
\begin{equation*}
\theta(\fP,\fQ)\leq\theta(\fP,\fR)+\theta(\fR,\fQ),\quad\text{for any }
\fP,\fQ,\fR\in\cS(\fH),
\end{equation*}
is needed to be proven. That $\theta(\fP,\fQ)$ is indeed a metric on
$\cS(\fH)$ has been shown in \cite{Brown}. In Lemma \ref{L-triangle}
below we will give an alternative proof of this fact.
\end{remark}

\begin{remark}
\label{R-tPQperp}
$\theta(\fP^\perp,\fQ^\perp)=\theta(\fP,\fQ)$. This follows from
the equalities
$$
\|P^\perp-Q^\perp\|=\|(I-P)-(I-Q)\|=\|P-Q\|,
$$
where $I$ is the identity operator on $\fH$.
\end{remark}

\begin{definition}
Two subspaces $\fP$ and $\fQ$ of the Hilbert space $\fH$ are said to
be in the \emph{acute-angle case} if $\fP\neq\{0\}$, $\fQ\neq\{0\}$,
and $\theta(\fP,\fQ)<\frac{\pi}{2}$,  that is, if
\begin{equation}
\label{neq:PQ1}
\|P-Q\|<1,
\end{equation}
where $P$ and $Q$ are the orthogonal projections in $\fH$ with
$\Ran(P)=\fP$ and $\Ran(Q)=\fQ$.
\end{definition}

\begin{remark}
We recall that the subspaces $\fP$ and $\fQ$ are said to be in the
\emph{acute case} if $\fP\cap\fQ^\perp=\fP^\perp\cap\fQ=\{0\}$
(cf., e.g., \cite[Definition 3.1]{DK70}). The bound  $\|P-Q\|<1$ implies
both $\fP\cap\fQ^\perp=\{0\}$ and $\fP^\perp\cap\fQ=\{0\}$ (see, e.g.,
\cite[Theorem~2.2]{KMM2}). Hence, if the subspaces $\fP$ and $\fQ$
are in the acute-angle case, they are automatically in the
acute case.
\end{remark}

\begin{remark}
\label{R-Graph}
It is known (see, e.g., \cite[Corollary 3.4]{KMM2}) that
inequality \eqref{neq:PQ1} holds true (and, thus, $\fP$ and $\fQ$
are in the acute-angle case) if and only if the subspace $\fQ$ is the
graph of a bounded linear operator  $X$ from the subspace $\fP$ to its
orthogonal complement $\fP^\perp$, i.e.
\begin{equation}
\label{eq:G}
\fQ=\cG(X):=\{x\oplus Xx\,\,| x\in\fP\}\,.
\end{equation}
In such a case the projection $Q$ admits the representation
\begin{equation}
\label{eq:SP} Q=\begin{pmatrix} (I_{\fP} + X^*X)^{-1} &
(I_{\fP}+X^*X)^{-1}X^*\\ X(I_{\fP}+X^*X)^{-1} &
X(I_{\fP}+X^*X)^{-1}X^*
\end{pmatrix}
\end{equation}
with respect the orthogonal decomposition
$\fH=\fP\oplus\fP^\perp$ (cf. \cite[Remark 3.6]{KMM2}).
Moreover, under condition \eqref{neq:PQ1} the orthogonal projections
$P$ and $Q$ are unitarily equivalent. In particular,
\begin{equation*}
P=U^*QU,
\end{equation*}
where the unitary operator $U$ is given by
\begin{equation}
\label{eq:UT}
U=\begin{pmatrix}
(I_{\fP} + X^*X)^{-1/2} & -X^*(I_{\fP^\perp} + XX^*)^{-1/2}\\
X(I_{\fP} + X^*X)^{-1/2} & (I_{\fP^\perp} + XX^*)^{-1/2}
\end{pmatrix}.
\end{equation}
\end{remark}

\begin{remark}
One verifies by inspection that the unitary operator \eqref{eq:UT}
possesses the remarkable properties
\begin{equation}
\label{dir-rot}
U^2=(Q^\perp-Q)(P^\perp-P) \quad\text{and}\quad \Real U> 0,
\end{equation}
where $\Real U=\frac{1}{2}(U+U^*)$ denotes the real part of $U$.
\end{remark}

The concept of direct rotation from one subspace in the Hilbert
space to another was suggested by C. Davis in \cite{Davis}. The idea
of this concept goes back yet to  B. Sz.-Nagy (see \cite[\S
105]{SzNagy}) and T. Kato (see \cite[Sections I.4.6 and
I.6.8]{Kato}). We adopt the following definition of direct rotation
(see \cite[Proposition 3.3]{DK70}; cf. \cite[Definition
2.12]{MotSel}).

\begin{definition}
Let $\fP$ and $\fQ$ be subspaces of the Hilbert space $\fH$.
A unitary operator $S$ on $\fH$ is called the \emph{direct rotation}
from $\fH$ to $\fQ$ if
\begin{equation}
\label{drot-gen}
QS=SP,\quad S^2=(Q^\perp-Q)(P^\perp-P),\quad\text{and \,} \Real S\geq 0,
\end{equation}
where $P$ and $Q$ are the orthogonal projections in $\fH$ such that
$\Ran(P)=\fP$ and $\Ran(Q)=\fQ$.
\end{definition}

\begin{remark}
\label{RemU}
If the subspaces $\fP$ and $\fQ$ are not in the acute case,
the direct rotation from $\fP$ to $\fQ$ exists if and only if
$$
\dim(\fP\cap\fQ^\perp)=\dim(\fQ\cap\fP^\perp)
$$
(see \cite[Proposition 3.2]{DK70}). If it exists, it is not unique.
\end{remark}

\begin{remark}
\label{RemUex}
If the subspaces $\fP$ and $\fQ$ are in the acute case then there
exists a unique direct rotation from $\fP$ to $\fQ$ (see
\cite[Propositions 3.1 and 3.3]{DK70} or \cite[Theorem
2.14]{MotSel}). Comparing \eqref{dir-rot} with \eqref{drot-gen}, one
concludes that the unitary operator $U$ given by \eqref{eq:UT}
represents the unique direct rotation from the subspace $\fP$ to the
subspace $\fQ$ whenever these subspaces are in the acute-angle case.
The direct rotation $U$ has the
extremal property (see \cite[Theorem~7.1]{Davis})
\begin{equation}
\label{UIextr}
\|U-I_\fH\|=\inf_{\widetilde{U}\in\cU(\fP,\fQ)}\|\widetilde{U}-I_\fH\|,
\end{equation}
where $\cU(\fP,\fQ)$ denotes the set of all unitary operators
$\widetilde{U}$ on $\fH$ such that $P=\widetilde{U}^*Q\widetilde{U}$.
Equality \eqref{UIextr} says that the direct rotation $U$ is norm closest to the
identity operator among all unitary operators on $\fH$ mapping $\fP$ onto $\fQ$.
\end{remark}

The operator $X$ in the graph representation \eqref{eq:G} is usually
called the \emph{angular operator} for the (ordered) pair of the subspaces
$\fP$ and $\fQ$. The usage of this term is motivated by the equality
(see, e.g., \cite{KMM2})
\begin{equation}
\label{eq:XT}
\Theta(\fP,\fQ)=\arctan\sqrt{X^*X},
\end{equation}
where $\Theta(\fP,\fQ)$ denotes the \emph{operator angle} between the subspaces
$\fP$ and $\fQ$ (measured relative to the subspace~$\fP$). One
verifies by inspection (see, e.g., \cite[Corollary 3.4]{KMM2}) that,
in the acute-angle case,
\begin{equation}
\label{eq:NPD}
\sin\bigl(\theta(\fP,\fQ)\bigr)\equiv\|P-Q\|
=\frac{\|X\|}{\sqrt{1+\|X\|^2}}=\sin\bigl\|\Theta(\fP,\fQ)\bigr\|,
\end{equation}
which means, in particular, that
\begin{equation}
\label{tetTet}
\theta(\fP,\fQ)=\|\Theta(\fP,\fQ)\bigr\|.
\end{equation}
Furthermore, the lower bound for the spectrum of the real part
of the direct rotation \eqref{eq:UT} is given by
(cf. \cite[Remark 2.18]{MotSel})
\begin{equation}
\label{ReU}
\min\bigl(\spec(\Real U)\bigr)=\frac{1}{\sqrt{1+\|X\|^2}}=
\cos\bigl(\theta(\fP,\fQ)\bigr).
\end{equation}

To have a more convenient characterization of the distinction between a unitary
operator and the identity one, we recall the notion of the spectral angle.

\begin{definition}
\label{spang}
Let $S$ be a unitary operator.
The number
\begin{equation*}
\vartheta(S) = \sup_{z\in \spec(S)} |\arg z|, \quad \arg
z\in(-\pi,\pi],
\end{equation*}
is called the \textit{spectral angle} of $S$.
\end{definition}
\begin{remark}
The size of $\|S-I\|$ is easily computed in terms of $\vartheta(S)$ and vice versa
(see \cite[Lemma 2.19]{MotSel}). In particular,
\begin{equation}
\label{ImW} \| S - I \| = 2 \,\sin\left(\frac{\vartheta(S)}{2}\right).
\end{equation}
Furthermore,
\begin{equation}
\label{ReT}
\cos\vartheta(S)=\min\bigl(\spec(\Real S)\bigr).
\end{equation}
\end{remark}

By comparing \eqref{ReU} with \eqref{ReT} and \eqref{UIextr} with
\eqref{ImW} we immediately arrive at the following assertion.

\begin{proposition}
\label{PropU}
Let $\fP$ and $\fQ$ be subspaces of a Hilbert space $\fH$.
Assume that $\fP$ and $\fQ$ are in the acute-angle case. Then:
\begin{enumerate}
\item[\it (i)] The maximal angle between $\fP$ and $\fQ$ is nothing but
the spectral angle of the direct rotation $U$ from $\fP$ to $\fQ$, i.e.,
$
\theta(\fP,\fQ)=\vartheta(U).
$
\item[\it (ii)] $\vartheta(S)\geq\theta(\fP,\fQ)$
for any unitary $S$ on $\fH$ mapping $\fP$ onto $\fQ$.
\end{enumerate}
\end{proposition}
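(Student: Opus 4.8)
The plan is to read both assertions directly off the identities already assembled just above the statement; the only genuine work is to justify passing from a relation between trigonometric functions to the corresponding relation between the angles themselves.

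For part (i), I would first note that, by Remark \ref{RemUex}, the operator $U$ of \eqref{eq:UT} is the (unique) direct rotation from $\fP$ to $\fQ$, so the spectral-angle identity \eqref{ReT} applies to it. Taking $S=U$ in \eqref{ReT} gives $\cos\vartheta(U)=\min\bigl(\spec(\Real U)\bigr)$, while \eqref{ReU} identifies this same minimum with $\cos\bigl(\theta(\fP,\fQ)\bigr)$. Hence $\cos\vartheta(U)=\cos\bigl(\theta(\fP,\fQ)\bigr)$. To upgrade this to $\vartheta(U)=\theta(\fP,\fQ)$, I would observe that the acute-angle hypothesis forces $\theta(\fP,\fQ)\in\bigl[0,\frac{\pi}{2}\bigr)$, whereas the property $\Real U>0$ recorded in \eqref{dir-rot} confines $\spec(U)$ to the open right half-plane and therefore places $\vartheta(U)$ in $\bigl[0,\frac{\pi}{2}\bigr)$ as well; since the cosine is strictly decreasing, hence injective, on $\bigl[0,\frac{\pi}{2}\bigr]$, equality of the cosines forces equality of the angles.

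For part (ii), I would invoke the extremal property \eqref{UIextr}. Any unitary $S$ on $\fH$ mapping $\fP$ onto $\fQ$ satisfies $P=S^*QS$, i.e.\ $S\in\cU(\fP,\fQ)$, so \eqref{UIextr} yields $\|U-I_\fH\|\leq\|S-I_\fH\|$. Rewriting both sides by means of \eqref{ImW} turns this into $\sin\bigl(\vartheta(U)/2\bigr)\leq\sin\bigl(\vartheta(S)/2\bigr)$. Since $\vartheta\in[0,\pi]$ always, both half-angles lie in $\bigl[0,\frac{\pi}{2}\bigr]$, where the sine is increasing; hence $\vartheta(U)\leq\vartheta(S)$. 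Combining this with part (i) gives $\theta(\fP,\fQ)=\vartheta(U)\leq\vartheta(S)$, which is the claim.

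The only real obstacle is this monotonicity bookkeeping: one must check that both angles (and, in (ii), their halves) lie in the interval on which the relevant trigonometric function is one-to-one, so that the numerical (in)equalities genuinely translate back into (in)equalities of angles and not merely of their cosines or sines. The strict positivity $\Real U>0$ of the direct rotation is exactly what pins $\vartheta(U)$ strictly below $\frac{\pi}{2}$ and makes the first such translation unambiguous.
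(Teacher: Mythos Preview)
Your proof is correct and follows exactly the route the paper indicates: it compares \eqref{ReU} with \eqref{ReT} for part (i) and \eqref{UIextr} with \eqref{ImW} for part (ii). The paper merely says the assertion follows ``immediately'' from these comparisons, whereas you have spelled out the monotonicity bookkeeping that makes the passage from trigonometric identities to angle (in)equalities legitimate; this added care is appropriate and does not deviate from the intended argument.
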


We conclude the present section with a proof of the triangle inequality
for the maximal angles between subspaces.

\begin{lemma}
\label{L-triangle}
Let $\fP$, $\fQ$, and $\fR$ be three arbitrary subspaces of the Hilbert space $\fH$.
The following inequality holds:
\begin{equation}
\label{eq:3ang}
\theta(\fP,\fQ)\leq\theta(\fP,\fR)+\theta(\fR,\fQ).
\end{equation}
\end{lemma}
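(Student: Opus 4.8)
The plan is to derive \eqref{eq:3ang} from the direct-rotation/spectral-angle machinery assembled above, reducing the whole assertion to the triangle inequality for the geodesic distance on the unit sphere of $\fH$. First I would dispose of the degenerate situations. Since $\|P-Q\|\le 1$ for any pair of orthogonal projections, every maximal angle lies in $\bigl[0,\frac{\pi}{2}\bigr]$; in particular $\theta(\fP,\fQ)\le\frac{\pi}{2}$. Hence if $\theta(\fP,\fR)+\theta(\fR,\fQ)\ge\frac{\pi}{2}$ — which in particular covers the cases where one of the subspaces is trivial or one of the two pairs fails to be in the acute-angle case — the inequality holds for free. It therefore remains to treat the case $\theta(\fP,\fR)<\frac{\pi}{2}$ and $\theta(\fR,\fQ)<\frac{\pi}{2}$, in which (all three subspaces being nonzero) both pairs $(\fP,\fR)$ and $(\fR,\fQ)$ are in the acute-angle case.

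In this case let $U_1$ be the direct rotation from $\fP$ to $\fR$ and $U_2$ the direct rotation from $\fR$ to $\fQ$, which exist and are unique by Remark \ref{RemUex}. By Proposition \ref{PropU}\,(i) one has $\vartheta(U_1)=\theta(\fP,\fR)$ and $\vartheta(U_2)=\theta(\fR,\fQ)$. From the intertwining relation in \eqref{drot-gen} I would read off $R=U_1PU_1^*$ and $Q=U_2RU_2^*$, whence $Q=(U_2U_1)P(U_2U_1)^*$; thus the unitary $U_2U_1$ maps $\fP$ onto $\fQ$ and satisfies $P=(U_2U_1)^*Q(U_2U_1)$, i.e.\ it belongs to $\cU(\fP,\fQ)$. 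Proposition \ref{PropU}\,(ii) then gives $\theta(\fP,\fQ)\le\vartheta(U_2U_1)$, so it suffices to establish the subadditivity of the spectral angle along the product,
\begin{equation*}
\vartheta(U_2U_1)\le\vartheta(U_1)+\vartheta(U_2).
\end{equation*}

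This last estimate is the heart of the matter, and I expect it to be the main obstacle. I would prove it by passing to the underlying real Hilbert space. Put $\alpha=\vartheta(U_1)$ and $\beta=\vartheta(U_2)$, both strictly below $\frac{\pi}{2}$, so that $\alpha+\beta<\pi$. By \eqref{ReT}, $\cos\vartheta(S)=\min\bigl(\spec(\Real S)\bigr)$ for any unitary $S$, so it is enough to show $\Real\langle x,U_2U_1x\rangle\ge\cos(\alpha+\beta)$ for every unit vector $x$. For unit vectors $u,w$ set $\angle(u,w)=\arccos\bigl(\Real\langle u,w\rangle\bigr)\in[0,\pi]$; this is exactly the great-circle distance on the unit sphere of $\fH$ regarded as a real Hilbert space, and hence obeys the triangle inequality. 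Since $\min\bigl(\spec(\Real U_j)\bigr)=\cos\vartheta(U_j)$ yields $\Real\langle u,U_ju\rangle\ge\cos\vartheta(U_j)$ for every unit vector $u$, taking $u=x$ for $j=1$ and $u=U_1x$ for $j=2$ gives $\angle(x,U_1x)\le\alpha$ and $\angle(U_1x,U_2U_1x)\le\beta$. The triangle inequality then furnishes $\angle(x,U_2U_1x)\le\alpha+\beta$, and since $\alpha+\beta<\pi$ and $\cos$ is decreasing on $[0,\pi]$ this is precisely $\Real\langle x,U_2U_1x\rangle\ge\cos(\alpha+\beta)$; invoking \eqref{ReT} once more gives $\vartheta(U_2U_1)\le\alpha+\beta$. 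Chaining the inequalities,
\begin{equation*}
\theta(\fP,\fQ)\le\vartheta(U_2U_1)\le\vartheta(U_1)+\vartheta(U_2)=\theta(\fP,\fR)+\theta(\fR,\fQ),
\end{equation*}
completes the argument. The single delicate point is the triangle inequality for $\angle$, i.e.\ that the great-circle distance is a genuine metric; this is classical (it follows, for instance, from the spherical law of cosines, the three vectors involved spanning a real subspace of dimension at most three), and I would either cite it or append the short reduction to the three-dimensional case.
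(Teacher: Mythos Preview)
Your proof is correct and follows essentially the same architecture as the paper's: handle the degenerate case $\theta(\fP,\fR)+\theta(\fR,\fQ)\ge\pi/2$ trivially, then in the acute-angle case compose the two direct rotations and invoke Proposition~\ref{PropU} together with the subadditivity $\vartheta(U_2U_1)\le\vartheta(U_1)+\vartheta(U_2)$. The only difference is that the paper obtains this last inequality by citing \cite[Lemma~2.22]{MotSel}, whereas you supply a self-contained argument via the great-circle metric on the unit sphere of the underlying real Hilbert space; your inline proof is a pleasant and elementary substitute for the external reference.
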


\begin{proof}
If $\theta(\fP,\fR)+\theta(\fR,\fQ)\geq\pi/2$, inequality
\eqref{eq:3ang} holds true since $\theta(\fP,\fQ)\leq\pi/2$ by the
definition of the maximal angle.

Suppose that $\theta(\fP,\fR)+\theta(\fR,\fQ)<\pi/2$. In such a case
both the pairs $(\fP,\fR)$ and $(\fR,\fQ)$  of the argument
subspaces are in the acute-angle case. Then there are a unique direct
rotation $U_1$ from $\fP$ to $\fR$ and a unique direct rotation
$U_2$ from $\fR$ to $\fQ$ (see Remark \ref{RemUex}). By \cite[Lemma
2.22]{MotSel}, the spectral angle $\vartheta(S)$ of the product
$S:=U_2U_1$ of the unitary operators $U_1$ and $U_2$ satisfies the
bound
\begin{align}
\label{tStUtU}
\vartheta(S)\leq&\vartheta(U_1)+\vartheta(U_2).
\end{align}
Notice that  by Proposition \ref{PropU}\,(i)
\begin{equation}
\label{tUtU}
\vartheta(U_1)=\theta(\fP,\fR)\text{\, and \,}
\vartheta(U_2)=\theta(\fR,\fQ),
\end{equation}
because both $U_1$ and $U_2$ are direct rotations. Since
$\Ran\bigl(U_1|_{\fP}\bigr)=\fQ$ and
$\Ran\bigl(U_2|_{\fQ}\bigr)=\fR$, the unitary operator $S$ maps
$\fP$ onto $\fQ$. Hence, $\vartheta(S)\geq\theta(\fP,\fQ)$ by
Proposition \ref{PropU}\,(ii). Combining this with \eqref{tStUtU}
and \eqref{tUtU} completes the proof.
\end{proof}

\section{An extension of the best previously known bound}
\label{S-MSext}

In this section we extend the norm estimate on variation of spectral
subspaces of a bounded self-adjoint operator $A$ under a bounded
self-adjoint perturbation $V$ established recently by K.\,A.\,Ma\-ka\-rov and
A.\,Se\-el\-mann in \cite{MakS10} to the case where $A$ is allowed to
be unbounded.

We begin with recalling the concept of a strong solution to the
operator Sylvester equation.

\begin{definition}
\label{DefSolSyl}
Let $\Lambda_0$ and $\Lambda_1$ be (possibly
unbounded) self-adjoint operators on the Hilbert spaces $\fH_0$ and
$\fH_1$, respectively, and $Y \in \cB(\fH_0, \fH_1)$.
A bounded operator $X\in\cB(\fH_0,\fH_1)$ is said to be a \emph{strong
solution} to the Sylvester equation
\begin{equation}
\label{SylEq} X\Lambda_0-\Lambda_1X=Y
\end{equation}
if
\begin{equation*}
\ran\bigl({X}|_{\dom(\Lambda_0)}\bigr)\subset\dom(\Lambda_1)
\end{equation*}
and
\begin{equation*}
X\Lambda_0f-\Lambda_1Xf=Yf \quad  \text{ for all \,} f\in
\dom(\Lambda_0).
\end{equation*}
\end{definition}

We will use the following well known result on a sharp norm bound
for strong solutions to operator Sylvester equations (cf.\,
\cite[The\-o\-rem~4.9 (i)]{AlMoSh}).

\begin{theorem}
\label{TSylB} Let $\Lambda_0$, $\Lambda_1$, and $Y$ be as
in Definition \ref{DefSolSyl}. If the spectra of
$\Lambda_0$ and $\Lambda_1$ are disjoint, i.e. if
$$
\delta:=\dist\bigl(\spec(\Lambda_0),\spec(\Lambda_1)\bigr)>0,
$$
then the Sylvester equation \eqref{SylEq}
has a unique strong solution $X\in\cB(\fH_0,\fH_1)$. Moreover, the
solution $X$ satisfies the bound
\begin{equation}
\label{XBg}
\|X\|\leq\frac{\pi}{2}\,\,\frac{\|Y\|}{\delta}.
\end{equation}
\end{theorem}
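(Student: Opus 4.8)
The plan is to solve \eqref{SylEq} by means of the spectral theorem for $\Lambda_0$ and $\Lambda_1$ and to extract the sharp constant $\pi/2$ from a Fourier (semigroup) representation of the solution. Since the bound \eqref{XBg} keeps its form under the simultaneous scaling $\Lambda_0\mapsto\Lambda_0/\delta$, $\Lambda_1\mapsto\Lambda_1/\delta$, $Y\mapsto Y/\delta$ (which normalizes the separation to $1$), I would first reduce to the case $\delta=1$.

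For existence and uniqueness, let $\sE_{\Lambda_0}$ and $\sE_{\Lambda_1}$ denote the spectral measures of $\Lambda_0$ and $\Lambda_1$. Sandwiching \eqref{SylEq} between the spectral projections yields, heuristically,
\begin{equation*}
(\lambda-\mu)\,\sE_{\Lambda_1}(d\mu)\,X\,\sE_{\Lambda_0}(d\lambda)=\sE_{\Lambda_1}(d\mu)\,Y\,\sE_{\Lambda_0}(d\lambda).
\end{equation*}
Because $|\lambda-\mu|\geq\delta>0$ whenever $\lambda\in\spec(\Lambda_0)$ and $\mu\in\spec(\Lambda_1)$, the factor $\lambda-\mu$ is invertible on the relevant spectral region. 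This forces uniqueness at once: any solution of the homogeneous equation satisfies $\sE_{\Lambda_1}(d\mu)\,X\,\sE_{\Lambda_0}(d\lambda)=0$, hence $X=0$. It also identifies the candidate solution as the double operator integral
\begin{equation*}
X=\int_{\spec(\Lambda_1)}\!\int_{\spec(\Lambda_0)}\frac{1}{\lambda-\mu}\,\sE_{\Lambda_1}(d\mu)\,Y\,\sE_{\Lambda_0}(d\lambda),
\end{equation*}
and, since $(\lambda-\mu)\cdot\tfrac{1}{\lambda-\mu}=1$, the strong-solution identity $X\Lambda_0-\Lambda_1X=Y$ is read off directly from this spectral representation, together with the domain inclusion $\Ran(X|_{\dom(\Lambda_0)})\subset\dom(\Lambda_1)$ guaranteed by $|\lambda-\mu|\geq\delta$.

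For the norm bound I would avoid estimating the double operator integral directly and instead represent the scalar kernel by a Fourier integral: I seek $\mathfrak f\in L^1(\bbR)$ with
\begin{equation*}
\frac{1}{s}=\int_{-\infty}^{\infty}\mathfrak f(t)\,\re^{-\ri s t}\,dt\qquad\text{for all } |s|\geq\delta.
\end{equation*}
Substituting $s=\lambda-\mu$ and interchanging the integrations converts the double operator integral into
\begin{equation*}
X=\int_{-\infty}^{\infty}\mathfrak f(t)\,\re^{\ri t\Lambda_1}\,Y\,\re^{-\ri t\Lambda_0}\,dt,
\end{equation*}
in which $\re^{\ri t\Lambda_1}$ and $\re^{-\ri t\Lambda_0}$ are unitary. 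Hence
\begin{equation*}
\|X\|\leq\|Y\|\int_{-\infty}^{\infty}|\mathfrak f(t)|\,dt=\|\mathfrak f\|_{L^1}\,\|Y\|,
\end{equation*}
and \eqref{XBg} follows once $\mathfrak f$ is chosen with $\|\mathfrak f\|_{L^1}=\pi/(2\delta)$.

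The crux is to exhibit such an admissible kernel of minimal $L^1$-norm. Since $\hat{\mathfrak f}$ is prescribed only on $|s|\geq\delta$ and is free on $(-\delta,\delta)$, this is the classical extremal problem of best $L^1$-approximation of the odd function $1/s$ by kernels whose Fourier spectrum lies in $[-\delta,\delta]$, whose optimal value is known to be $\pi/(2\delta)$; producing the extremal $\mathfrak f$ explicitly (an odd kernel) and verifying both the interpolation identity on $|s|\geq\delta$ and the value of its $L^1$-norm is the main obstacle. This is exactly where the constant $\pi/2$ enters, as opposed to the value $1$ that is available in the one-sided gap case $\spec(\Lambda_1)\leq a<a+\delta\leq\spec(\Lambda_0)$, where the elementary representation $\frac1s=\int_0^\infty\re^{-s\tau}\,d\tau$ gives $\|X\|\leq\|Y\|/\delta$. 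The remaining work is technical: justifying the interchange of integrations and the convergence of the operator-valued integral, which is routine given the integrability of $\mathfrak f$.
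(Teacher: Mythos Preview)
The paper does not supply its own proof of this theorem: it records the statement as a known result, attributing the constant $\pi/2$ to Sz.-Nagy and Strausz, its sharpness to McEachin, and the precise formulation (strong solution for unbounded $\Lambda_0,\Lambda_1$) to the combination of \cite[Theorem 2.7]{AMM} and \cite[Lemma 4.2]{AM01}. So there is no in-paper argument to compare your proposal against.

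That said, your outline is exactly the classical route those references follow: represent the kernel $(\lambda-\mu)^{-1}$ on $|\lambda-\mu|\geq\delta$ as the Fourier transform of an $L^1$ function, rewrite the double operator integral as $\int_\bbR \mathfrak f(t)\,\re^{\ri t\Lambda_1}Y\re^{-\ri t\Lambda_0}\,dt$, and read off $\|X\|\leq\|\mathfrak f\|_{L^1}\|Y\|$. The extremal value $\|\mathfrak f\|_{L^1}=\pi/(2\delta)$ is precisely the Sz.-Nagy result the paper cites, so you are invoking the same black box the authors do. Your uniqueness argument via spectral sandwiching is also standard and correct. The only places where your sketch is genuinely thin are (a) the verification that the Bochner integral actually defines a \emph{strong} solution in the sense of Definition~\ref{DefSolSyl} when $\Lambda_0,\Lambda_1$ are unbounded (this is where \cite{AM01} does real work), and (b) the explicit construction or citation of the extremal $\mathfrak f$; but you flag both honestly, and neither is a gap in the mathematical strategy.
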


\begin{remark}
The fact that the constant $c$ in the estimate $\delta\|X\|\leq c
\|Y\|$ for the
generic disposition of the spectra of $\Lambda_0$ and $\Lambda_1$ is
not greater than $\pi/2$ goes back to  B.~Sz.-Nagy and A.~Strausz
(see \cite{SN53}). The sharpness of the value $c=\pi/2$ has been
proven by R.\,McEa\-chin \cite{McE93}. In its present form the
statement is obtained by combining \cite[Theorem 2.7]{AMM} and
\cite[Lemma 4.2]{AM01}.
\end{remark}

The next statement represents nothing but a corollary to Theorem \ref{TSylB}.

\begin{proposition}{\rm(cf. \cite{McE93})}
\label{PropS}
Let  $A$ and $B$ be possibly unbounded self-adjoint operators on the
Hilbert space $\fH$ with the same domain, i.e. $\Dom(B)=\Dom(A)$.
Assume that the closure $C=\overline{B-A}$ of the symmetric operator
$B-A$ is a bounded self-adjoint operator on $\fH$. Then for any two
Borel sets $\omega,\Omega\subset\bbR$ the following
inequality holds:
\begin{equation}
\label{SylEE}
\dist(\omega,\Omega)\,\|\sE_A(\omega)\sE_B(\Omega)\|\leq\frac{\pi}{2}\|C\|,
\end{equation}
where $\sE_A(\omega)$ and $\sE_L(\Omega)$ are the spectral projections
of $A$ and $B$ associated with the sets
$\omega$ and $\Omega$, respectively.
\end{proposition}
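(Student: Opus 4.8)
The plan is to realize the operator $\sE_A(\omega)\sE_B(\Omega)$ as a strong solution of a suitable Sylvester equation and then invoke Theorem~\ref{TSylB}. First I dispose of the trivial cases: if $\dist(\omega,\Omega)=0$ the asserted inequality reads $0\le\frac{\pi}{2}\|C\|$, and if either $\sE_A(\omega)=0$ or $\sE_B(\Omega)=0$ the left-hand side vanishes as well. So I may assume $\dist(\omega,\Omega)>0$ and that both spectral subspaces are nonzero. Write $P=\sE_A(\omega)$, $R=\sE_B(\Omega)$, and set $\fH_0=\Ran(R)$, $\fH_1=\Ran(P)$. Since $R$ reduces $B$ and $P$ reduces $A$, the parts $\Lambda_0:=B|_{\fH_0}$ and $\Lambda_1:=A|_{\fH_1}$ are self-adjoint operators on $\fH_0$ and $\fH_1$ with $\spec(\Lambda_0)\subset\overline{\Omega}$ and $\spec(\Lambda_1)\subset\overline{\omega}$; consequently $\delta:=\dist\bigl(\spec(\Lambda_0),\spec(\Lambda_1)\bigr)\ge\dist(\overline{\Omega},\overline{\omega})=\dist(\omega,\Omega)>0$. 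I take the candidate solution to be $X:=P|_{\fH_0}\in\cB(\fH_0,\fH_1)$, i.e.\ the compression of $P$ to the two subspaces; a one-line estimate gives $\|X\|=\|PR\|=\|\sE_A(\omega)\sE_B(\Omega)\|$.

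Next I verify that $X$ is a strong solution of $X\Lambda_0-\Lambda_1X=Y$ with $Y:=P\,C|_{\fH_0}\in\cB(\fH_0,\fH_1)$. The range condition of Definition~\ref{DefSolSyl} is the point where the hypothesis $\Dom(A)=\Dom(B)$ is used: for $f\in\Dom(\Lambda_0)=\Dom(B)\cap\fH_0$ one has $f\in\Dom(A)$, and since the spectral projection $P$ of $A$ leaves $\Dom(A)$ invariant, $Xf=Pf\in\Dom(A)\cap\fH_1=\Dom(\Lambda_1)$. For the equation itself, I compute for such $f$ that $X\Lambda_0 f-\Lambda_1 Xf=PBf-APf$. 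On the common domain $\Dom(A)=\Dom(B)$ the closure $C$ acts as $Cf=Bf-Af$, so $PBf=PAf+PCf$; and because $P=\sE_A(\omega)$ commutes with $A$ we have $PAf=APf$. These two facts collapse the right-hand side to $PCf=Yf$, which is exactly the Sylvester identity, while $\|Y\|\le\|C\|$ is immediate from $\|P\|\le 1$.

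Finally, Theorem~\ref{TSylB} (applicable since $\delta>0$) identifies $X$ as the unique strong solution and yields $\|X\|\le\frac{\pi}{2}\,\|Y\|/\delta\le\frac{\pi}{2}\,\|C\|/\delta$. Using $\delta\ge\dist(\omega,\Omega)$ together with $\|X\|=\|\sE_A(\omega)\sE_B(\Omega)\|$ gives $\dist(\omega,\Omega)\,\|\sE_A(\omega)\sE_B(\Omega)\|\le\frac{\pi}{2}\|C\|$, as claimed. I expect the only genuinely delicate step to be the domain bookkeeping in the previous paragraph---namely checking that $X$ maps $\Dom(\Lambda_0)$ into $\Dom(\Lambda_1)$ and that the intertwining relation $PAf=APf$ holds on the common domain---since everything else reduces to the sharp Sylvester bound combined with the elementary geometric estimate $\delta\ge\dist(\omega,\Omega)$.
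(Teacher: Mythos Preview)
Your proof is correct and follows essentially the same route as the paper's: both realize the compression $P|_{\Ran(\sE_B(\Omega))}$ as a strong solution of the Sylvester equation $X\Lambda_0-\Lambda_1X=PC|_{\Ran(\sE_B(\Omega))}$ with $\Lambda_0=B|_{\Ran(\sE_B(\Omega))}$ and $\Lambda_1=A|_{\Ran(P)}$, and then apply Theorem~\ref{TSylB}. Your treatment of the trivial cases and the spectral inclusions $\spec(\Lambda_0)\subset\overline{\Omega}$, $\spec(\Lambda_1)\subset\overline{\omega}$ is in fact slightly more careful than the paper's reduction to the case $\omega\subset\spec(A)$, $\Omega\subset\spec(B)$.
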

\begin{proof}
Clearly, it suffices to give a proof only for the case where
\begin{equation}
\label{AssAL}
\omega\subset\spec(A), \quad\Omega\subset\spec(B),\text{\, and \,}
\dist(\omega,\Omega)>0.
\end{equation}
Assuming \eqref{AssAL}, we set $P=\sE_A(\omega)$ and
$Q=\sE_B(\Omega)$. The spectral theorem implies
\begin{align}
\label{DAP}
Pf&\in\Dom(A)\cap\fP\text{\,\, for any }f\in\Dom(A),\\
\label{DLQ}
Qg&\in\Dom(B)\cap\fQ\text{\,\, for any }g\in\Dom(B),
\end{align}
where $\fP:=\Ran(P)$ and $\fQ:=\Ran(Q)$ are the spectral
subspaces of the operators $A$ and $B$ associated with their
respective spectral subsets $\omega$ and $\Omega$. Due to
$\Dom(B)=\Dom(A)$ the inclusions \eqref{DAP} and \eqref{DLQ} yield
\begin{equation}
\label{PQin}
\Ran\left(PQ|_{\Dom(B)}\right)\subset\Dom(A)\cap\fP.
\end{equation}
Since $P$ commutes with $A$, $Q$ commutes with $B$, $P^2=P$, and $Q^2=Q$,
from \eqref{DAP}--\eqref{PQin} it follows that
\begin{equation}
\label{SylAux}
PQ\,\,QBQf-PAP\,\,PQf=P\,C\,Qf\quad\text{ for any }f\in\Dom(B).
\end{equation}

Now let $A_\omega$ and $B_\Omega$ be the parts of the self-adjoint
operators $A$ and $B$ associated with their spectral subspaces
$\fP=\Ran(P)$ and $\fQ=\Ran(Q)$. That is,
\begin{align}
\label{DAom}
A_\omega&=A|_{\fP}\,\,\text{ with \,}\Dom(A_\omega)=\fP\cap\Dom(A),\\
\label{DLOm}
B_\Omega&=B|_{\fQ}\,\,\text{ with \,}\Dom(B_\Omega)=\fQ\cap\Dom(B)
\quad\bigl(=\fQ\cap\Dom(A)\bigr).
\end{align}
Also set $X:=P|_{\fQ}=PQ|_{\fQ}$. Taking into account \eqref{PQin},
\eqref{DAom}, and \eqref{DLOm} we have
$$
\Ran\left(X|_{\Dom(B_\Omega)}\right)\subset\Dom(A_\omega)
$$
and then \eqref{SylAux} implies
$$
XB_\Omega f-A_\omega X f=PCf\quad \text{ for any }f\in\Dom(B_\Omega),
$$
which means that the operator $X$ is a strong solution to the operator Sylvester equation
$$
XB_\Omega -A_\omega X =PC|_{\fQ}.
$$
To prove \eqref{SylEE} it only remains to notice that
\begin{align*}
\|\sE_A(\omega)\sE_B(&\Omega)\|=\|PQ\|=\|X\|,\\
\spec(A_\omega)=\omega,\,\,\,\, &\spec(B_\Omega)=\Omega,\,\,\,\, \|PC|_{\fQ}\|\leq\|C\|
\end{align*}
and then to apply Theorem \ref{TSylB}.
\end{proof}

\begin{theorem}
\label{ThGlog} Given a (possibly unbounded) self-adjoint operator
$A$ on the Hilbert space $\fH$, assume that a Borel set
$\sigma\subset\bbR$ is an isolated component of the spectrum of $A$,
that is, \mbox{$\sigma\subset\spec(A)$} and
\begin{equation}
\label{dsig}
\dist(\sigma,\Sigma)=d>0,
\end{equation}
where $\Sigma=\spec(A)\setminus\sigma$. Assume, in addition, that $V$ is a
bounded self-adjoint operator on $\fH$ such that
\begin{equation}
\label{dV2}
\|V\|<{d}/{2}
\end{equation}
and let $\Gamma(t)$,
$t\in[0,1]$, be the spectral projection of the self-adjoint
operator
\begin{equation}
\label{Lt}
L_t=A+tV, \quad \Dom(L_t)=\Dom(A),
\end{equation}
associated with the closed
$\|V\|$-neighborhood $\cO_{\|V\|}(\sigma)$ of the set
$\sigma$.
The projection family $\{\Gamma(t)\}_{t\in[0,1]}$
is norm continuous on the interval $[0,1]$ and
\begin{equation}
\label{eq:MSb} \arcsin\left(\|\Gamma(b)-\Gamma(a)\|\right) \leq\frac{\pi}{4}
\log\left(\frac{d-2a\|V\|}{d-2b\|V\|}\right) \quad\text{whenever \,\,}
0\leq a<b\leq 1.
\end{equation}
\end{theorem}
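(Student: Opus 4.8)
The plan is to pin down the spectral geometry of the family $L_t=A+tV$, to extract from Proposition \ref{PropS} a sharp \emph{local} two-sided estimate for $\|\Gamma(s)-\Gamma(t)\|$, and then to chain such local estimates through the triangle inequality of Lemma \ref{L-triangle} and pass to a Riemann-sum limit.

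First I would fix the spectral picture. Since $\spec(L_t)\subseteq\cO_{t\|V\|}\bigl(\spec(A)\bigr)=\cO_{t\|V\|}(\sigma)\cup\cO_{t\|V\|}(\Sigma)$, and by \eqref{dV2} the two neighbourhoods $\cO_{t\|V\|}(\sigma)$ and $\cO_{t\|V\|}(\Sigma)$ are disjoint for every $t\in[0,1]$ (their distance is at least $d-2t\|V\|\geq d-2\|V\|>0$), one checks that $\Gamma(t)=\sE_{L_t}\bigl(\cO_{\|V\|}(\sigma)\bigr)=\sE_{L_t}\bigl(\cO_{t\|V\|}(\sigma)\bigr)$ and that $\Gamma(t)^\perp=\sE_{L_t}\bigl(\cO_{t\|V\|}(\Sigma)\bigr)$. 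The first identity holds because any spectral point of $L_t$ within distance $\|V\|$ of $\sigma$ is in fact within $t\|V\|$ of $\sigma$: otherwise it would lie within $t\|V\|$ of $\Sigma$ and its distance to $\sigma$ would be at least $d-t\|V\|>\|V\|$. The effective gap at parameter $t$ is therefore $d-2t\|V\|$.

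Next I would derive the local bound. Fix $0\leq t\leq s\leq 1$ and set $P=\Gamma(t)$, $Q=\Gamma(s)$. From the elementary identity $P-Q=PQ^\perp-P^\perp Q$ together with the mutual orthogonality of the ranges (and of the co-ranges) of the two summands one gets $\|P-Q\|=\max\bigl\{\|PQ^\perp\|,\|P^\perp Q\|\bigr\}$. I then apply Proposition \ref{PropS} to the pair $L_t,L_s$, which share the domain $\Dom(A)$ and satisfy $\overline{L_s-L_t}=(s-t)V$, a bounded self-adjoint operator of norm $(s-t)\|V\|$. Taking $\omega=\cO_{t\|V\|}(\sigma)$ and $\Omega=\cO_{s\|V\|}(\Sigma)$ gives $\sE_{L_t}(\omega)=P$, $\sE_{L_s}(\Omega)=Q^\perp$ and $\dist(\omega,\Omega)\geq d-(s+t)\|V\|>0$, whence $\|PQ^\perp\|\leq\frac{\pi}{2}\,\frac{(s-t)\|V\|}{d-(s+t)\|V\|}$; the symmetric choice $\omega=\cO_{t\|V\|}(\Sigma)$, $\Omega=\cO_{s\|V\|}(\sigma)$ yields the same bound for $\|P^\perp Q\|$. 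Hence
\[
\|\Gamma(s)-\Gamma(t)\|\leq\frac{\pi}{2}\,\frac{(s-t)\|V\|}{d-(s+t)\|V\|},\qquad 0\leq t\leq s\leq 1,
\]
and letting $s\to t$ this already delivers the asserted norm continuity of $t\mapsto\Gamma(t)$.

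Finally I would chain and pass to the limit. For a partition $a=t_0<t_1<\dots<t_n=b$, the triangle inequality \eqref{eq:3ang} for the maximal angle $\theta(\cdot,\cdot)=\arcsin\|\,\cdot-\cdot\,\|$ gives, once the mesh is small enough that each argument is $<1$,
\[
\arcsin\|\Gamma(b)-\Gamma(a)\|\leq\sum_{i=0}^{n-1}\arcsin\!\left(\frac{\pi}{2}\,\frac{(t_{i+1}-t_i)\|V\|}{d-(t_i+t_{i+1})\|V\|}\right).
\]
Since $\arcsin x=x+O(x^3)$ and each argument is $O(t_{i+1}-t_i)$, the right-hand side is a Riemann sum converging, as the mesh tends to $0$, to
\[
\int_a^b\frac{\pi}{2}\,\frac{\|V\|\,dt}{d-2t\|V\|}=\frac{\pi}{4}\log\!\left(\frac{d-2a\|V\|}{d-2b\|V\|}\right),
\]
and since the left-hand side is partition-independent, this limit bounds it, which is exactly \eqref{eq:MSb}. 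The main obstacle is obtaining the sharp constant: this forces the use of the $t$-dependent effective gap $d-(s+t)\|V\|$ rather than the crude $d-2\|V\|$, which is precisely why the first step's identification of $\Gamma(t)$ with the spectral projection onto the smaller neighbourhood $\cO_{t\|V\|}(\sigma)$ is indispensable; one must also control the cubic remainder in $\arcsin$ uniformly so that only the linear Riemann sum survives and produces the logarithm.
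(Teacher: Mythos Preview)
Your proposal is correct and follows the paper's route almost step for step through the spectral localisation $\omega_t\subset\cO_{t\|V\|}(\sigma)$, $\Omega_t\subset\cO_{t\|V\|}(\Sigma)$ and the local bound
\[
\|\Gamma(s)-\Gamma(t)\|\leq\frac{\pi}{2}\,\frac{\|V\|\,|s-t|}{d-(s+t)\|V\|},
\]
obtained from Proposition~\ref{PropS} together with $\|P-Q\|=\max\{\|PQ^\perp\|,\|P^\perp Q\|\}$. The only genuine divergence is in the last step. The paper does \emph{not} use the triangle inequality of Lemma~\ref{L-triangle}; instead it first proves the convexity inequality
\[
\frac{t-s}{d-(s+t)\|V\|}<\int_s^t\frac{d\tau}{d-2\|V\|\tau},
\]
so that for \emph{every} partition the sum $\sum_j\|\Gamma(t_{j+1})-\Gamma(t_j)\|$ is already dominated by the integral, and then invokes the external result \cite[Corollary 4.2]{MakS10} asserting $\arcsin\|\Gamma(b)-\Gamma(a)\|\leq\ell(\Gamma)$ for any continuous projection path. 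Your route trades that citation for the paper's own Lemma~\ref{L-triangle} and a Riemann-sum/arcsin-remainder limit argument; this is more self-contained but requires the mesh $\to 0$ error control you flag, whereas the paper's convexity trick dispenses with any limiting procedure on the estimate side and pushes the analytic content into the cited path-length lemma.
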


\begin{proof}

Let $\omega_t=\spec(L_t)\cap\cO_{\|V\|}(\sigma)$ and
$\Omega_t=\spec(L_t)\cap\cO_{\|V\|}(\Sigma)$, $t\in[0,1]$. Since $A$
is a self-adjoint operator and $L_t$ is given by \eqref{Lt}, we have
\begin{equation*}
\omega_t\cap\Omega_t=\emptyset, \quad \spec(L_t)=\omega_t\cup\Omega_t,
\end{equation*}
and, in fact,
\begin{equation}
\label{omOmt}
\omega_t\subset\cO_{t\|V\|}(\sigma)\text{\,\, and
\,\,}\Omega_t\subset\cO_{t\|V\|}(\Sigma),\quad t\in[0,1].
\end{equation}
Notice that under condition \eqref{dV2} from \eqref{dsig} and
\eqref{omOmt} it follows that, for $s,t\in[0,1]$,
\begin{align}
\label{domOmt}
\dist(\omega_t,\Omega_s)\geq d-t\|V\|-s\|V\|=&d-\|V\|(t+s).
\end{align}
In particular,
\begin{align}
\label{dVts}
d-\|V\|(t+s)&\geq d-2\|V\|>0\quad\text{whenever \,\,}s,t\in[0,1].
\end{align}

Obviously, $\Gamma(t)=\sE_{L_t}(\omega_t)$ and
$\Gamma(t)^\perp=\sE_{L_t}(\Omega_t)$, $t\in[0,1]$. Thus, for any $s,t\in[0,1]$
Proposition \ref{PropS} implies
\begin{align}
\label{BPBP}
\|\Gamma(s)\Gamma(t)^\perp\|&\leq
\frac{\pi}{2}\,\frac{\|V\||t-s|}{\dist(\omega_s,\Omega_t)} \text{\,\, and \,\,}
\|\Gamma(s)^\perp\Gamma(t)\|\leq
\frac{\pi}{2}\,\frac{\|V\||t-s|}{\dist(\Omega_s,\omega_t)}.
\end{align}
Since $\|\Gamma(t)-\Gamma(s)\|=
\max\bigl\{\|\Gamma(t)\Gamma(s)^\perp\|,\|\Gamma(t)^\perp\Gamma(s)\|\bigr\}$,
from \eqref{domOmt} and \eqref{BPBP} one concludes that
\begin{equation}
\label{GsGt}
\|\Gamma(t)-\Gamma(s)\|\leq \frac{\pi}{2}\,\frac{\|V\||t-s|}{d-\|V\|(t+s)}
\quad \text{for any \,\,}s,t\in[0,1].
\end{equation}
In view of \eqref{dVts}, the operator norm continuity of the
projection path $\{\Gamma(t)\}_{t\in[0,1]}$ on the interval $[0,1]$
follows immediately from estimate \eqref{GsGt}.

Now suppose that $s<t$ (as before,  $s,t\in[0,1]$)
and observe that
\begin{equation}
\label{neq:c}
\frac{t-s}{d-\|V\|(t+s)}<\int_s^t \frac{d\tau}{d-2\|V\|\tau}.
\end{equation}
Indeed, the difference between the right-hand side and left-hand side parts of
\eqref{neq:c} may be written as
\begin{equation}
\label{Intl}
\int_s^{\tau_c} \bigl(f(\tau)+f(2\tau_c-\tau)-2f(\tau_c)\bigr)d\tau,
\end{equation}
where $\tau_c=(s+t)/2$ is the center of the interval $[s,t]$ and
$$
f(\tau):=\frac{1}{d-2\|V\|\tau},\,\, \tau\in[s,t].
$$
One verifies by inspection that the expression
$f(\tau)+f(2\tau_c-\tau)-2f(\tau_c)$ under the integration sign in
\eqref{Intl} is strictly positive for $\tau\in[s,\tau_c)$ and zero
for $\tau=\tau_c$. Therefore, the integral in \eqref{Intl} is
positive and hence inequality \eqref{neq:c} holds true.

Assume that $0\leq a<b\leq 1$. For a sequence of points
$t_0,t_1,\ldots,t_n\in[a,b]$, $n\in\bbN$, such that
\begin{equation}
\label{atb}
a=t_0<t_1<\ldots<t_n=b
\end{equation}
by \eqref{GsGt} and \eqref{neq:c} one obtains
\begin{align}
\nonumber
\sum\limits_{j=0}^{n-1}\,\|\Gamma(t_{j+1}-\Gamma(t_j)\|\leq&
\frac{\pi\|V\|}{2}\,\,\sum\limits_{j=0}^{n-1}\,\,\frac{t_{j+1}-t_j}{d-\|V\|(t_j+t_{j+1})}\\
\nonumber
<&
\frac{\pi\|V\|}{2}\,\,\sum\limits_{j=0}^{n-1}\,\,
\int_{t_j}^{t_{j+1}} \frac{d\tau}{d-2\|V\|\tau}\\
\label{Intf}
&=\frac{\pi\|V\|}{2}\int_a^b \frac{d\tau}{d-2\|V\|\tau}.
\end{align}
Evaluating the last integral in \eqref{Intf} and taking supremum over all choices
of $n\in\bbN$ and $t_0,t_1,\ldots,$ $t_n\in[a,b]$ satisfying \eqref{atb}
results in the bound
\begin{equation*}
\ell(\Gamma)\leq\frac{\pi}{4}
\log\left(\frac{d-2a\|V\|}{d-2b\|V\|}\right),
\end{equation*}
where
\begin{equation}
\label{lpath}
\ell(\Gamma):=\sup\left\{
\sum\limits_{j=0}^{n-1}\,\|\Gamma(t_{j+1})-\Gamma(t_j)\|\,\,\biggl|\,\,
n\in\mathbb{N},\,\, a=t_0<t_1<\ldots<t_n=b
\right\}
\end{equation}
is the length of the (continuous) projection path $\Gamma(t)$, $t\in[a,b]$.
Applying \cite[Corollary 4.2]{MakS10}, which establishes that
$
\arcsin(\|\Gamma(b)-\Gamma(a)\|)\leq \ell(\Gamma),
$
completes the proof.
\end{proof}

\begin{remark}
\label{R-MSbound}
Let $\fA=\Ran\bigl(\sE_A(\sigma)\bigr)$ and
$\fL=\Ran\bigl(\sE_L(\omega)\bigr)$ where $L:=A+V$ with
$\Dom(L)=\Dom(A)$ and $\omega=\spec(L)\cap\cO_{\|V\|}(\sigma)$. By
setting $a=0$ and $b=1$ in \eqref{eq:MSb}, one obtains
\begin{equation}
\label{MSbound}
\theta(\fA,\fL)\leq M_{_{\rm MS}}\left(\frac{\|V\|}{d}\right),
\end{equation}
where $\theta(\fA,\fL)$ is the maximal angle between the spectral
subspaces $\fA$ and $\fL$ and
\begin{equation}
\label{MMS}
M_{_{\rm MS}}(x):
=\frac{\pi}{4}\,\log\left(\frac{1}{1-2x}\right),\quad
x\in\bigl[0,\mbox{\small$\frac{1}{2}$}\bigr).
\end{equation}
For bounded $A$, the estimate \eqref{MSbound} has been established
in \cite[Theorem 6.1]{MakS10}.

Note that \eqref{MSbound} implies that
$\theta(\fA,\fL)<\frac{\pi}{2}$ and, thus, that the subspaces $\fA$
and $\fL$ are in the acute-angle case if
\begin{equation}
\label{VMS}
\|V\|<c_{_{\rm MS}} d,
\end{equation}
where $c_{_{\rm MS}}$ is given by \eqref{cMS}.
\end{remark}

\begin{remark}
\label{R-MSpi4}
For future references we remark that, due to \eqref{MSbound},
\begin{equation}
\label{teta-pi4}
\theta(\fA,\fL)\leq\frac{\pi}{4}\qquad\text{whenever\,\,\,
}\|V\|\leq c_{_{\pi/4}}\,d,
\end{equation}
where
\begin{equation}
\label{cpi4}
c_{_{\pi/4}}=\frac{1}{2}-\frac{1}{2\re}=0.316060\ldots\,.
\end{equation}
\end{remark}

\begin{remark}
\label{R-KMMbound}
The bound \eqref{MSbound} is stronger than the earlier estimate \cite{KMM1}
\begin{equation}
\label{KMMbound} \theta(\fA,\fL)\leq M_{_{\rm
KMM}}\left(\frac{\|V\|}{d}\right)\quad\left(<\frac{\pi}{2}\right)
\qquad \text{if \,\,}\|V\|<c_{_{\rm KMM}} d,
\end{equation}
where the value of $c_{_{\rm KMM}}$ is given by \eqref{cKMM} and
\begin{align}
\label{MKMM}
M_{_{\rm KMM}}(x):=&\arcsin\left(\frac{\pi
x}{2(1-x)}\right), \quad 0\leq x\leq c_{_{\rm KMM}}.
\end{align}
The estimate \eqref{KMMbound} was established in the proof of Lemma
2.2 in \cite{KMM1}.
\end{remark}

\section{A priori and a posteriori generic sin\,$2\theta$ estimates}
\label{S-Sin2T}

We begin this section with the proof of an estimate for
$\sin\bigl(2\theta(\fA,\fL)\bigr)$, where $\fA$ is a reducing
subspace of the self-adjoint operator $A$ and $\fL$ is the spectral
subspace of the boundedly perturbed self-adjoint operator $L=A+V$.
In general, $\fA$ does not need to be a spectral subspace of $A$.

\begin{theorem}
\label{ThS2Tpo} Let $A$ be a (possibly unbounded) self-adjoint
operator on the Hilbert space $\fH$. Suppose that $V$ is a bounded
self-adjoint operator on $\fH$ and $L=A+V$ with $\Dom(L)=\Dom(A)$.
Assume that $\mathfrak{A}$ is a reducing subspace of $A$ and $\fL$
is a spectral subspace of $L$ associated with a Borel
subset $\omega$ of its spectrum. If the subspaces $\fA$ and $\fL$ are in the
acute-angle case then
\begin{equation}
\label{s2t-apo}
\dist(\omega,\Omega)\,\sin(2\theta)\leq \pi \|V\|,
\end{equation}
where $\Omega=\spec(L)\setminus\omega$ is the remainder of the
spectrum of $L$ and $\theta:=\theta(\fA,\fL)$ denotes the maximal angle between
$\fA$ and $\fL$.
\end{theorem}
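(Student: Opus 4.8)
The plan is to pin down $\sin(2\theta)$ through a single off-diagonal block. Let $P$ and $Q$ be the orthogonal projections with $\Ran(P)=\fA$ and $\Ran(Q)=\fL$; since $\fL$ is the spectral subspace of $L$ associated with $\omega$, we have $Q=\sE_L(\omega)$ and $Q^\perp=\sE_L(\Omega)$. If $\dist(\omega,\Omega)=0$ the inequality \eqref{s2t-apo} is trivial, so I assume $\dist(\omega,\Omega)>0$. I would then introduce the operator $Y:=Q^\perp P\big|_{\fL}\in\cB(\fL,\fL^\perp)$ and reduce the whole statement to the two estimates
\[
\tfrac12\sin(2\theta)\le\|Y\|\le\frac{\pi}{2}\,\frac{\|V\|}{\dist(\omega,\Omega)},
\]
whose combination is exactly \eqref{s2t-apo}.

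For the lower bound I would use that, in the acute-angle case, $\fA$ is the graph over $\fL$ of a bounded angular operator $\tilde X\in\cB(\fL,\fL^\perp)$ with $\|\tilde X\|=\tan\theta$ (cf. Remark \ref{R-Graph} together with \eqref{eq:XT} and \eqref{tetTet}, the maximal angle being symmetric in its arguments). Writing $P$ in the block form \eqref{eq:SP} relative to $\fH=\fL\oplus\fL^\perp$ identifies $Y$ with the off-diagonal block $\tilde X(I_\fL+\tilde X^*\tilde X)^{-1}$, whose norm equals $\max_k s_k/(1+s_k^2)$ taken over the singular values $s_k$ of $\tilde X$. Retaining only the largest singular value $s_{\max}=\tan\theta$ gives $\|Y\|\ge\tan\theta/(1+\tan^2\theta)=\sin\theta\cos\theta=\tfrac12\sin(2\theta)$.

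For the upper bound I would exploit that $\fA$ reduces $A$, so $P$ commutes with $A$ on $\Dom(A)$ and hence $PL-LP=PV-VP$ there, while $\fL$ and $\fL^\perp$ reduce $L$ into the parts $L_\omega:=L|_{\fL}$ and $L_\Omega:=L|_{\fL^\perp}$ with $\spec(L_\omega)\subset\overline{\omega}$ and $\spec(L_\Omega)\subset\overline{\Omega}$. A short computation on the core $\fL\cap\Dom(L)$ then shows that $Y$ is a strong solution of the Sylvester equation
\[
Y L_\omega-L_\Omega Y=Q^\perp(PV-VP)\big|_{\fL},
\]
and since $\dist\bigl(\spec(L_\omega),\spec(L_\Omega)\bigr)\ge\dist(\omega,\Omega)>0$, Theorem \ref{TSylB} yields $\|Y\|\le\frac{\pi}{2}\,\|Q^\perp(PV-VP)|_{\fL}\|/\dist(\omega,\Omega)$. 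The crux of the whole argument, and the place I expect to be the main obstacle, is the commutator bound $\|Q^\perp(PV-VP)Q\|\le\|V\|$: the naive estimate $\|PV-VP\|\le 2\|V\|$ is useless, as it would introduce a spurious factor of $2$ and destroy the constant $\pi$. To avoid this I would test $Y$ against unit vectors $g\in\fL$ and $h\in\fL^\perp$ and use the self-adjointness of $P$ and $V$ to cancel the parts of $PV-VP$ that are diagonal in $P$, arriving at the balanced identity $\langle h,(PV-VP)g\rangle=\langle Ph,VP^\perp g\rangle-\langle P^\perp h,VPg\rangle$. Cauchy--Schwarz then gives $|\langle h,(PV-VP)g\rangle|\le\|V\|\bigl(\|Ph\|\,\|P^\perp g\|+\|P^\perp h\|\,\|Pg\|\bigr)$, and writing $\|Ph\|=\cos\beta$, $\|P^\perp h\|=\sin\beta$, $\|Pg\|=\cos\gamma$, $\|P^\perp g\|=\sin\gamma$ the bracket becomes $\sin(\beta+\gamma)\le 1$. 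Combining the two displayed bounds on $\|Y\|$ completes the proof.
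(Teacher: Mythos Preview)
Your proof is correct but follows a genuinely different route from the paper's. The paper works with the decomposition $\fH=\fA\oplus\fA^\perp$: writing $L$ as the $2\times2$ block matrix \eqref{L} with off-diagonal part $B=PV|_{\fA^\perp}$, it realises $\fL$ as the graph of an angular operator $X:\fA\to\fA^\perp$ satisfying the Riccati equation $XD_0-D_1X+XBX=B^*$, and then performs a similarity transformation to turn this into a Sylvester equation $X\Lambda_0-\Lambda_1X=(I+XX^*)^{1/2}B^*(I+X^*X)^{1/2}$ with self-adjoint $\Lambda_0,\Lambda_1$ having spectra $\omega,\Omega$. The bound $\|B\|\leq\|V\|$ is then immediate, and Theorem \ref{TSylB} gives $\|X\|/(1+\|X\|^2)\leq\tfrac{\pi}{2}\|V\|/\dist(\omega,\Omega)$. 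You instead decompose along $\fH=\fL\oplus\fL^\perp$, where the parts $L_\omega,L_\Omega$ are \emph{already} self-adjoint with the right spectra, so your Sylvester equation for $Y=Q^\perp P|_{\fL}$ is obtained directly from $[P,L]=[P,V]$ without any Riccati machinery or similarity transforms. The price you pay is that the right-hand side $Q^\perp(PV-VP)Q$ is not a simple compression of $V$, and its norm bound requires the balanced identity and the $\sin(\beta+\gamma)\leq1$ trick --- which is in fact the crux of your argument and works exactly as you describe. Your route is more elementary and self-contained; the paper's route leans on the graph/Riccati formalism of \cite{AMM} but has a trivial right-hand side bound.
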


\begin{proof}
For \,$\dist(\omega,\Omega)=0$\, the bound \eqref{s2t-apo} is trivial. Throughout
the proof below we will assume that \,$\dist(\omega,\Omega)\neq 0$.

Since $\fA$ is a reducing subspace of the self-adjoint operator $A$,
its orthogonal complement $\fA^\perp=\fH\ominus\fA$ is also a
reducing subspace of $A$. Furthermore,
$\Dom(A)=\Dom(A_0)\oplus\Dom(A_1)$, where $\Dom(A_0)$ and $\Dom(A_1)$
are domains of the parts $A_0=A|_\fA$ and $A_1=A|_{\fA^\perp}$ of
$A$ in its reducing subspaces $\fA$ and $\fA^\perp$, respectively
(see, e.g., \cite[Section 3.6]{BirSol}).

Let $P$ be the orthogonal projection in $\fH$ onto the subspace
$\fA$. Since $V\in\cB(\fH)$, the (self-adjoint) operator
$L$ admits the following block representation with
respect to the orthogonal decomposition $\fH=\fA\oplus\fA^\perp$:
\begin{align}
\label{L}
L & = \left(\begin{array}{cc} D_0 & B\\
B^* & D_1
\end{array}\right), \quad \Dom(L)=\Dom(D_0)\oplus\Dom(D_1)
\quad\bigl(=\Dom(A)\bigr),
\end{align}
where $B=PV|_{\fA^\perp}$,\, $D_0=A_0+PV|_\fA$, with
$\Dom(D_0)=\Dom(A_0)$ and $D_1=A_1+P^\perp V|_{\fA^\perp}$, with
$\Dom(D_1)=\Dom(A_1)$.

That the subspaces $\fA$ and $\fL$ are in the acute-angle case
implies that there is a bounded operator $X$ from $\fA$ to
$\fA^\perp$ such that $\fL$ is the graph of $X$, that is,
$\fL=\cG(X)$ (see Remark \ref{R-Graph}). It is well known (see,
e.g., \cite[Lemma 5.3]{AMM}) that the graph subspace $\cG(X)$ is a
reducing subspace for the block operator matrix \eqref{L} if and
only if the angular operator $X$ is a strong solution to the
operator Riccati equation
\begin{equation}
\label{Ric0} XD_0-D_1 X+XBX=B^*.
\end{equation}
The notion of the strong solution to \eqref{Ric0} means that (see
\cite{AMM,AM01}; cf. Definition \ref{DefSolSyl})
\begin{equation}
\label{ranric} \ran\bigl({X}|_{\Dom(D_0)}\bigr)\subset\dom(D_1)
\end{equation}
and
\begin{equation}
\label{rics}
XD_0f-D_1Xf+XBXf=B^*f  \text{ \, for all \, } f\in \dom(D_0).
\end{equation}
It is straightforward to verify that if $X$ is a strong solution to
\eqref{rics} then
\begin{equation}
\label{RicX} XZ_0f-Z_1Xf=B^*(I+X^*X)f\quad \text{for
all \,}f\in\Dom(D_0),
\end{equation}
where $Z_0=D_0+BX$ with $\dom(Z_0)=\dom(D_0)=\Dom(A_0)$ and $Z_1=D_1-B^*X^*$
with $\dom(Z_1)=\dom(D_1)=\dom(A_1)$.

Our next step is in transforming \eqref{RicX} into
\begin{align}
\nonumber & X(I+X^*X)^{-1/2}\Lambda_0(I+X^*X)^{1/2}f-
(I+XX^*)^{-1/2}\Lambda_1(I+XX^*)^{1/2}Xf\qquad\\
\label{RicXX} &\qquad\qquad\qquad \qquad\qquad =B^*(I+X^*X)f \quad
\text{for all }f\in\Dom(D_0),
\end{align}
where $\Lambda_0$ and $\Lambda_1$ are given by
\begin{equation}\label{Lam0}
\begin{array}{l}
\Lambda_0 =(I+X^*X)^{1/2}Z_0(I+X^*X)^{-1/2},\\
\quad\quad
\Dom(\Lambda_0)=\Ran\Bigl((I+X^*X)^{1/2}\bigr|_{\Dom(D_0)}\Bigr),
\end{array}
\end{equation}
and
\begin{equation}
\label{Lam1}
\begin{array}{l}
\Lambda_1 = (I+XX^*)^{1/2}Z_1(I+XX^*)^{-1/2},\\
\quad\quad
\Dom(\Lambda_1)=\Ran\Bigl((I+XX^*)^{1/2}\bigr|_{\Dom(D_1)}\Bigr).
\end{array}
\end{equation}
Note that the (self-adjoint) operator $L$ is unitary equivalent to
the block diagonal operator matrix
$\Lambda=\diag(\Lambda_0,\Lambda_1)$,
$\Dom(\Lambda)=\Dom(\Lambda_0)\oplus\Dom(\Lambda_1)$ (see, e.g.,
\cite[Theorem 5.5]{AMM}) and, thus, both $\Lambda_0$ and $\Lambda_1$
are self-adjoint operators. Furthermore, from \eqref{Lam0} and
\eqref{Lam1} it follows that $Z_0$ and $Z_1$ are similar to
$\Lambda_0$ and $\Lambda_1$, respectively. Combining \cite[Theorem
5.5]{AMM} with \cite[Corollary 2.9 (ii)]{AlMoSh} then yields
\begin{equation}
\label{spLL}
\spec(\Lambda_0)=\spec(L|_{\fL})=\omega\quad\text{and}\quad
\spec(\Lambda_1)=\spec(L|_{\fL^\perp})=\Omega.
\end{equation}

Applying $(I+XX^*)^{1/2}$ from the left to both sides of \eqref{RicX}
and choosing $f=(I+X^*X)^{-1/2}g$ with $g\in\Dom(\Lambda_0)$, we
arrive at the Sylvester equation
\begin{equation}
\label{XKSyl} K\Lambda_0g-\Lambda_1 Kg=Yg \quad \text{for all }
g\in\dom(\Lambda_0),
\end{equation}
where
\begin{align}
\label{XX}
K=&\ \, (I+XX^*)^{1/2}X(I+X^*X)^{-1/2},\qquad\\
\label{XY} Y=&(I+XX^*)^{1/2}B^*(I+X^*X)^{1/2}.
\end{align}
By \eqref{Lam0} we have
$\Ran\biggl((I+X^*X)^{-1/2}\bigl|_{\Dom(\Lambda_0)}\biggr)=\Dom(D_0)$.
Furthermore, $$\ran\bigl({X}|_{\dom(D_0)}\bigr)\subset\dom(D_1)$$ by
\eqref{ranric}, and thus, by \eqref{Lam1},
\begin{equation}
\label{DomK}
\Ran\bigl(K\bigr|_{\dom(\Lambda_0)}\bigr)\subset\dom(\Lambda_1).
\end{equation}
Hence $K$ is a strong solution to the Sylvester equation
\eqref{XKSyl}.

It is easy to verify (see \cite[Lemma 2.5]{AMT09}) that
$(I+XX^*)^{1/2}X=X(I+X^*X)^{1/2}$. Thus, \eqref{XX} simplifies
to nothing but the identity $X=K$, which by \eqref{XKSyl} and
\eqref{DomK} means that $X$ is a strong solution to the Sylvester
equation
\begin{equation}
\label{XSylY} X\Lambda_0 -\Lambda_1 X=Y.
\end{equation}
Observe that  $\|Y\|\leq \|B\|(1+\|X\|^2)$.  Taking into account
\eqref{spLL}, Theorem \ref{TSylB} yields
$$
\frac{\|X\|}{1+\|X\|^2}\leq \frac{\pi}{2}\frac{\|B\|}{\dist(\omega,\Omega)}.
$$
Now the claim follows from the fact that $2\|X\|/(1+\|X\|^2)=\sin(2\theta)$
by \eqref{eq:NPD}.
\end{proof}
\begin{remark}
\label{R-tpi4}
Clearly, if $\pi\|V\|>\dist(\omega,\Omega)$, the estimate \eqref{s2t-apo} is
of no interest.
Suppose that
$\dist(\omega,\Omega)=\delta>0\text{\, and \,}
\pi\|V\|\leq\delta.$
In such a case \eqref{s2t-apo} does allow to obtain a bound for the
maximal angle $\theta$ but only provided the location of $\theta$
relative to $\frac{\pi}{4}$ is known. In particular, if it is known
that $\theta\leq\frac{\pi}{4}$ then \eqref{s2t-apo} implies the
upper bound
$\theta\leq\frac{1}{2}\arcsin\left(\frac{\pi\|V\|}{\delta}\right)$.
On the contrary, if it is known that $\theta\geq\frac{\pi}{4}$ then
\eqref{s2t-apo} yields the lower bound
$\theta\geq\frac{\pi}{2}-\frac{1}{2}\arcsin\left(\frac{\pi\|V\|}{\delta}\right)$.
\end{remark}

\begin{corollary}
\label{ThS2Tpr} Let $A$ be a (possibly unbounded) self-adjoint
operator on the Hilbert space $\fH$. Assume that $\fA$ is the
spectral subspace of $A$ associated with a Borel subset
$\sigma$ of its spectrum. Suppose that $V$ is a bounded self-adjoint
operator on $\fH$ and $L=A+V$ with $\Dom(L)=\Dom(A)$. Furthermore,
assume that $\fL$ is a reducing subspace of $L$.
If the subspaces $\fA$ and $\fL$ are in the acute-angle case then
\begin{equation}
\label{s2t-apr}
\dist(\sigma,\Sigma)\,\sin(2\theta)\leq \pi \|V\|,
\end{equation}
where $\Sigma=\spec(A)\setminus\sigma$ is the remainder of the
spectrum of $A$ and $\theta$ denotes the maximal angle between
$\fA$ and $\fL$.
\end{corollary}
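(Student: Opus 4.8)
The plan is to obtain \eqref{s2t-apr} as the exact dual of Theorem \ref{ThS2Tpo}, read off by interchanging the roles of $A$ and $L$. The crucial observation is that, since $V$ is bounded and self-adjoint, the unperturbed operator $A$ may equally well be viewed as a bounded self-adjoint perturbation of $L$: one has $A=L+(-V)$ with $\Dom(A)=\Dom(L)$ and $\|{-}V\|=\|V\|$. Accordingly, I would apply Theorem \ref{ThS2Tpo} with $L$ in the role of the unperturbed operator, $-V$ in the role of the perturbation, and $A$ in the role of the perturbed operator. Under this identification the subspace $\fL$, being a reducing subspace of $L$, plays the part of the reducing subspace of the unperturbed operator, while $\fA$, the spectral subspace of $A$ associated with $\sigma$, plays the part of the spectral subspace of the perturbed operator; the complementary spectral set becomes $\Sigma=\spec(A)\setminus\sigma$.

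It then remains to check that every hypothesis of Theorem \ref{ThS2Tpo} survives the interchange. Self-adjointness of $L$, boundedness and self-adjointness of $-V$, and the domain equality $\Dom(A)=\Dom(L)$ hold by assumption, and the acute-angle hypothesis is symmetric in its two arguments, since it is simply the condition $\|P-Q\|<1$; hence the pair $(\fL,\fA)$ is in the acute-angle case whenever $(\fA,\fL)$ is. Invoking Theorem \ref{ThS2Tpo} in this configuration yields
\begin{equation*}
\dist(\sigma,\Sigma)\,\sin\bigl(2\,\theta(\fL,\fA)\bigr)\leq\pi\,\|{-}V\|=\pi\,\|V\|.
\end{equation*}

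Finally, by the symmetry of the maximal angle recorded in Remark \ref{R-maxangle}, one has $\theta(\fL,\fA)=\theta(\fA,\fL)=\theta$, and substituting this into the displayed inequality gives precisely \eqref{s2t-apr}. I anticipate no genuine difficulty in this argument, as it rests entirely on the already-proven Theorem \ref{ThS2Tpo} together with the elementary symmetry of $\theta$; the only point demanding attention is the bookkeeping that ensures the spectral and reducing roles of $\fA$ and $\fL$ are correctly swapped when $A$ and $L$ trade places.
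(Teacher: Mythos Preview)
Your proposal is correct and follows essentially the same approach as the paper: the paper's proof simply writes $A=L+W$ with $W=-V$ and invokes Theorem \ref{ThS2Tpo}, which is exactly the role-swap you carry out, only stated more tersely. Your additional verification of the hypotheses (domain equality, symmetry of the acute-angle condition, and $\theta(\fL,\fA)=\theta(\fA,\fL)$) is sound and makes explicit what the paper leaves to the reader.
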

\begin{proof}
Consider $A$ as the perturbation of the operator $L$, namely view
$A$ as $A=L+W$ with $W=-V$, and then the assertion follows from
Theorem \ref{ThS2Tpo}.
\end{proof}

\begin{remark}
\label{Rtet-pi4-a} Suppose that $\dist(\sigma,\Sigma)=d>0$ and\,
$\|V\|\leq c_{_{\pi/4}}d$,\, where $c_{_{\pi/4}}$ is given by
\eqref{cpi4}; observe that
$c_{_{\pi/4}}<\frac{1}{\pi}=0.318309\ldots$\, and, thus,
$\frac{\pi\|V\|}{d}<1$. Let $\omega=\spec(L)\cap\cO_{\|V\|}(\sigma)$
and $\fL=\Ran\bigl(\sE_L(\omega)\bigr)$. By Remark \ref{R-MSpi4}
under the condition  $\|V\|\leq c_{_{\pi/4}}d$ we have
$\theta(\fA,\fL)\leq\frac{\pi}{4}$ and, hence, \eqref{s2t-apr}
yields the bound (cf. Remark \ref{R-tpi4})
\begin{equation}
\label{s2t-pi4}
\theta(\fA,\fL)\leq\frac{1}{2}\arcsin\left(\frac{\pi\|V\|}{d}\right).
\end{equation}
\end{remark}
\smallskip

Notice that
\begin{equation}
\label{2consts}
\frac{4}{\pi^2+4}=0.288400\ldots\text{\,\, and \,\,}
c_{_{\pi/4}}>\frac{4}{\pi^2+4}.
\end{equation}
In Theorem~\ref{ThMain} below we will prove that for
$\frac{4}{4+\pi^2}d<\|V\|\leq\frac{1}{\pi}d$ there is a bound on
$\theta(\fA,\fL)$ tighter than estimate~\eqref{s2t-pi4}.

The estimates \eqref{s2t-apo} and \eqref{s2t-apr} we obtained in
Theorem \ref{ThS2Tpo} and Corollary \ref{ThS2Tpr} will be called the
\emph{generic} a posteriori and a priori $\sin2\theta$ bounds,
respectively. These estimates resemble the corresponding bounds from
the celebrated $\sin2\Theta$ theorems proven by C.\,Davis and
W.\,M.\,Kahan in \cite[Section 7]{DK70} for particular dispositions
of the sets $\omega$ and $\Omega$ or $\sigma$ and $\Sigma$.  Recall
that, when proving the $\sin2\Theta$ theorems, it is assumed in
\cite{DK70} that the convex hull of one of the sets $\omega$ and
$\Omega$ (resp., the convex hull of one of the sets $\sigma$ and
$\Sigma$) does not intersect the other set. An immediately visible
difference is that the constant $\pi$ shows up on the right-hand side parts
of the \emph{generic} $\sin2\theta$ estimates \eqref{s2t-apo} and
\eqref{s2t-apr} instead of the constant 2 in the Davis-Kahan
$\sin2\Theta$ theorems.

\section{A new rotation bound by multiply employing sin\,2$\theta$ estimate}
\label{S-StarBound}

Let $\{\vk_n\}_{n=0}^\infty$ be a number sequence with
\begin{equation}
\label{xns}
\vk_0=0,\quad \vk_n=\frac{4}{\pi^2+4}+\frac{\pi^2-4}{\pi^2+4}\vk_{n-1},\quad n=1,2,\ldots\,.
\end{equation}
One easily verifies that the general term of this sequence reads
\begin{equation}
\label{xn}
\vk_n=\frac{1-q^n}{2}, \quad n=0,1,2,\ldots,
\end{equation}
with
\begin{equation}
\label{alq}
q=\dfrac{\pi^2-4}{\pi^2+4}<1.
\end{equation}
Obviously, the sequence \eqref{xn} is strictly monotonously increasing and
it is bounded from above by $1/2$,
\begin{equation}
\label{xn12}
\vk_0<\vk_1<\vk_2<\ldots<\vk_n<\ldots<{1}/{2}.
\end{equation}
Moreover,
\begin{equation}
\label{limxn}
\lim_{n\to\infty} \vk_n={1}/{2}.
\end{equation}
Thus, this sequence produces a countable partition of the interval
$\bigl[0,\frac{1}{2}\bigr)$,
\begin{equation}
\label{Union}
\bigl[0,\mbox{$\frac{1}{2}$}\bigr)=\bigcup_{n=0}^\infty[\vk_n,\vk_{n+1}).
\end{equation}

Taking into account \eqref{Union}, with the
sequence $\{\vk_n\}_{n=0}^\infty$ we associate a function $M_\star(x)$,
$M_\star:\bigl[0,\frac{1}{2}\bigr)\to\bbR$, that is defined
separately on each elementary interval $[\vk_n,\vk_{n+1})$ by
\begin{equation}
\label{Mstar}
M_\star(x)\bigr|_{[\vk_n,\vk_{n+1})}=\frac{n}{2}\arcsin\left(\frac{4\pi}{\pi^2+4}\right)
+\frac{1}{2}\arcsin\left(\frac{\pi(x-\vk_n)}{1-2\vk_n}\right),
\,\,\, n=0,1,2,\ldots\,.\quad
\end{equation}
We note that the function $M_\star(x)$ may be equivalently written in the
form \eqref{MstarIn}.

\begin{proposition}
\label{PropM}
The function $M_\star(x)$ is continuous and continuously
differentiable on the interval $\bigl[0,\frac{1}{2}\bigr)$.
Furthermore, this function is strictly monotonously increasing on
$\bigl[0,\frac{1}{2}\bigr)$ and
\mbox{$\lim\limits_{x\,\,\uparrow\,\frac{1}{2}}M_\star(x)=+\infty$.}
\end{proposition}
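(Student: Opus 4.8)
The plan is to exploit the piecewise definition \eqref{Mstar} directly. On the interior of each elementary interval $(\vk_n,\vk_{n+1})$ the function $M_\star$ is the composition of $\arcsin$ with the affine map $x\mapsto\frac{\pi(x-\vk_n)}{1-2\vk_n}$, so it is automatically smooth there as long as this affine argument stays strictly inside $(-1,1)$. First I would record, from the recursion \eqref{xns} and the closed form \eqref{xn}, the two identities $1-2\vk_n=q^n$ and $\vk_{n+1}-\vk_n=\frac{4}{\pi^2+4}(1-2\vk_n)$, where $q$ is given by \eqref{alq}. The second identity shows that as $x$ runs over $[\vk_n,\vk_{n+1}]$ the argument of the second $\arcsin$ in \eqref{Mstar} grows linearly from $0$ to exactly $\frac{4\pi}{\pi^2+4}$. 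Since $(\pi-2)^2>0$ is equivalent to $\frac{4\pi}{\pi^2+4}<1$, this argument never reaches $\pm1$, so $M_\star$ is real-analytic on each half-open interval and the only points that require attention are the partition nodes $\vk_n$.

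For continuity at $\vk_{n+1}$, I would take the left limit of the formula valid on $[\vk_n,\vk_{n+1})$: as $x\uparrow\vk_{n+1}$ the second $\arcsin$ tends to $\arcsin\bigl(\frac{4\pi}{\pi^2+4}\bigr)$, so the limiting value is $\frac{n+1}{2}\arcsin\bigl(\frac{4\pi}{\pi^2+4}\bigr)$. On the other hand, the value furnished by the next interval $[\vk_{n+1},\vk_{n+2})$ at $x=\vk_{n+1}$ has a vanishing second $\arcsin$ and equals the same quantity $\frac{n+1}{2}\arcsin\bigl(\frac{4\pi}{\pi^2+4}\bigr)$. These agree, giving continuity at every node and hence on all of $\bigl[0,\frac12\bigr)$.

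The hard part will be the $C^1$ matching at the nodes, and this is the step I expect to carry the proof. Differentiating \eqref{Mstar} on $[\vk_n,\vk_{n+1})$ gives $M_\star'(x)=\frac{\pi}{2(1-2\vk_n)}\bigl(1-(\frac{\pi(x-\vk_n)}{1-2\vk_n})^2\bigr)^{-1/2}$, so the right-hand derivative at $\vk_n$ is $\frac{\pi}{2(1-2\vk_n)}=\frac{\pi}{2q^n}$, while the left-hand derivative at $\vk_{n+1}$ carries the extra factor $\bigl(1-(\frac{4\pi}{\pi^2+4})^2\bigr)^{-1/2}$. The key computation is that $1-\bigl(\frac{4\pi}{\pi^2+4}\bigr)^2=\frac{(\pi^2-4)^2}{(\pi^2+4)^2}$, whose square root is exactly $q=\frac{\pi^2-4}{\pi^2+4}$; hence the left derivative at $\vk_{n+1}$ equals $\frac{\pi}{2q(1-2\vk_n)}=\frac{\pi}{2q^{n+1}}$. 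Comparing with the right-hand derivative at $\vk_{n+1}$, namely $\frac{\pi}{2(1-2\vk_{n+1})}=\frac{\pi}{2q^{n+1}}$, shows the one-sided derivatives coincide. This is precisely where the geometric recursion defining $\{\vk_n\}$ in \eqref{xns} is used, and it is what makes $M_\star$ continuously differentiable.

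Strict monotonicity is then immediate: $M_\star'(x)>0$ throughout each interval because $1-2\vk_n=q^n>0$ and the $\arcsin$ argument stays below $1$, and continuity upgrades this to strict monotonicity on the whole interval $\bigl[0,\frac12\bigr)$. For the final assertion I would invoke \eqref{limxn} together with $M_\star(\vk_n)=\frac{n}{2}\arcsin\bigl(\frac{4\pi}{\pi^2+4}\bigr)$, which tends to $+\infty$ since $\arcsin\bigl(\frac{4\pi}{\pi^2+4}\bigr)>0$; because $M_\star$ is increasing and already unbounded along the sequence $\vk_n\uparrow\frac12$, it follows that $\lim_{x\,\uparrow\,\frac12}M_\star(x)=+\infty$.
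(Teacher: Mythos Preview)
Your proof is correct and follows essentially the same approach as the paper's: both verify continuity and $C^1$-matching at the partition nodes $\vk_n$ via the recursion identity $\vk_{n+1}-\vk_n=\frac{4}{\pi^2+4}(1-2\vk_n)$, then deduce strict monotonicity from $M_\star'>0$ and divergence from $M_\star(\vk_n)=\frac{n}{2}\arcsin\bigl(\frac{4\pi}{\pi^2+4}\bigr)\to\infty$. Your explicit use of $1-2\vk_n=q^n$ together with the algebraic identity $1-\bigl(\frac{4\pi}{\pi^2+4}\bigr)^2=q^2$ makes the derivative-matching step slightly more transparent than the paper's ``one verifies by inspection,'' but the argument is the same.
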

\begin{proof}
Clearly, one needs to check continuity and continuous
differentiability of $M_\star(x)$ only at the points $\vk_n$,
$n=1,2,\ldots$\,. Given $n\in\bbN$, by
\eqref{Mstar} for $x\in[\vk_{n-1},\vk_n)$ we have
\begin{equation}
\label{Mxn}
M_\star(x)=\frac{n-1}{2}\arcsin\left(\frac{4\pi}{\pi^2+4}\right)
+\frac{1}{2}\arcsin\left(\frac{\pi(x-\vk_{n-1})}{1-2\vk_{n-1}}\right),\quad
x\in[\vk_{n-1},\vk_n).
\end{equation}
Note that \eqref{xns} yields
\begin{align}
\nonumber
\vk_n-\vk_{n-1}&=\frac{4}{\pi^2+4}+\frac{\pi^2-4}{\pi^2+4}\vk_{n-1}-\vk_{n-1}\\
\label{help}
           &=\frac{4}{\pi^2+4}\left(1-2\vk_{n-1}\right),
           \quad \text{for all \,}n\in\bbN.
\end{align}
By \eqref{help} one observes that
\begin{equation}
\label{Mlc}
\frac{x-\vk_{n-1}}{1-2\vk_{n-1}}\,\,\mathop{\longrightarrow}\limits_{x\to
\vk_n}\,\,\frac{\vk_n-\vk_{n-1}}{1-2\vk_{n-1}}=\frac{4}{\pi^2+4}
\end{equation}
and then from \eqref{Mxn} it follows that
\begin{equation}
\label{Mlimleft}
\lim\limits_{x\,\uparrow \vk_n}M_\star(x)=
\frac{n}{2}\arcsin\left(\frac{4\pi}{\pi^2+4}\right).
\end{equation}
Meanwhile, by its definition \eqref{Mstar} on the interval
$x\in[\vk_n,\vk_{n+1})$, the function $M_\star(x)$ is right-continuous at
$x=\vk_n$ and
$$
\frac{n}{2}\arcsin\left(\frac{4\pi}{\pi^2+4}\right)=
\lim\limits_{x\,\downarrow \vk_n}M_\star(x)=M_\star(\vk_n).
$$
Hence, \eqref{Mlimleft} yields continuity of
$M_\star(x)$ at $x=\vk_n$.

Using equality \eqref{help} one more time, one verifies by inspection
that for any $n\in\bbN$ the left and right limiting values
$$
\lim\limits_{x\,\uparrow \vk_n}M'_\star(x)=\frac{1}{2}\,
\frac{1}{\sqrt{1-\dfrac{\pi^2(\vk_n-\vk_{n-1})^2}{(1-2\vk_{n-1})^2}}}\,
\frac{\pi}{1-2\vk_{n-1}}
\quad\text{and}\quad
\lim\limits_{x\,\downarrow \vk_n}M'_\star(x)=\frac{1}{2}\frac{\pi}{1-2\vk_n}
$$
of the derivative $M'_\star(x)$ as $x\to \vk_n$ are equal to each other.
Hence,  for any $n\in\bbN$ the derivative $M'_\star(x)$ is
continuous at $x=\vk_n$ and then it is continuous on
$\bigl[0,\frac{1}{2}\bigr)$.

Obviously, $M'_\star(x)>0$ for all $x\in\bigl(0,\frac{1}{2}\bigr)$,
which means that the function $M_\star(x)$ is strictly monotonously
increasing on $\bigl[0,\frac{1}{2}\bigr)$. Then, given any
$n\in\bbN$,  from \eqref{Mstar} it follows that
\mbox{$M_\star(x)\geq n C_0$} with
$C_0=\frac{1}{2}\arcsin\left(\frac{4\pi}{\pi^2+4}\right)>0 $
whenever $x\geq \vk_n$. Taking into account \eqref{limxn}, this
implies that \mbox{$M_\star(x)\to\infty$} as \mbox{$x\to\frac{1}{2}$},
completing the proof.
\end{proof}

\begin{remark}
\label{R-cstar}
Since the function $M_\star(x)$ is continuous and strictly monotonous on
$\bigl[0,\frac{1}{2}\bigr)$ and $M_\star(0)=0$,
$\lim\limits_{x\,\,\uparrow\,\frac{1}{2}}M_\star(x)=+\infty$,
there is a unique number $c_\star\in\bigl[0,\frac{1}{2}\bigr)$ such that
\begin{equation}
\label{Msolv}
M_\star(c_\star)=\frac{\pi}{2}.
\end{equation}
An explicit numerical evaluation of
\begin{equation}
\label{C0}
C_0=\frac{1}{2}\arcsin\left(\frac{4\pi}{\pi^2+4}\right)
\end{equation}
shows that
$C_0=\frac{\pi}{2}\times 0.360907\ldots$\,. Thus, $2C_0<\frac{\pi}{2}$
while $3C_0>\frac{\pi}{2}$. Hence, $c_\star\in[\vk_2,\vk_3)$ and by \eqref{Mstar},
with $n=2$, equation \eqref{Msolv} turns into
\begin{equation}
\label{eqcs}
\arcsin\left(\frac{4\pi}{\pi^2+4}\right)
+\frac{1}{2}\arcsin\left(\frac{\pi(c_\star-\vk_2)}{1-2\vk_2}\right)=\frac{\pi}{2},
\end{equation}
with $\vk_2$ being equal (see \eqref{xn})  to
\begin{equation}
\label{x2}
\vk_2=
\frac{8\pi^2}{(\pi^2+4)^2}=0.410451\ldots \quad\left(>\frac{1}{\pi}\right).
\end{equation}
One verifies by inspection that the solution $c_\star$ to equation
\eqref{eqcs}, and hence to equation \eqref{Msolv}, reads
\begin{equation}
\label{cstar}
c_\star=16\,\,\frac{\pi^6-2\pi^4+32\pi^2-32}{(\pi^2+4)^4}=0.454169\ldots\,\,.
\end{equation}
\end{remark}

\begin{proposition}
{\rm (Optimality of the function $M_\star$.)}
Let $\{\mu_n\}_{n=0}^\infty\subset\bigl[0,\frac{1}{2}\bigr)$ be
an arbitrary mono\-to\-nous\-ly increasing number sequence such that
\begin{equation}
\label{yns}
\mu_0=0 \quad\text{and}\quad
0<\frac{\pi(\mu_{n}-\mu_{n-1})}{1-2\mu_{n-1}}\leq 1\quad\text{for \,}
n\geq 1.
\end{equation}
Assume that $\sup\nolimits_n \mu_n:=\mu_{\rm sup}$ and introduce the
function $F:\,[0,\mu_{\rm sup})\to \bbR$ by
\begin{align}
\label{F1}
F(x)\bigr|_{[0,\mu_1)}&=\frac{1}{2}\arcsin(\pi x),\\
\label{Fn}
F(x)\bigr|_{[\mu_n,\mu_{n+1})}&=
\frac{1}{2}\sum\limits_{j=1}^{n}\arcsin\left(\frac{\pi(\mu_j-\mu_{j-1})}{1-2\mu_{j-1}}\right)
+\frac{1}{2}\arcsin\left(\frac{\pi(x-\mu_n)}{1-2\mu_n}\right),\quad n\geq 1,
\end{align}
The function $M_\star(x)$ is optimal in the sense that if
$\{\mu_n\}_{n=0}^\infty$ does not coincide with the sequence
\eqref{xns}, then there always exists an open interval
$\cF\subset(0,\mu_{\rm sup})$ such that $F(x)>M_\star(x)$ for all
$x\in\cF$.
\end{proposition}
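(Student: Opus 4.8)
The plan is to locate the first index at which the two partitions differ and to analyse the sign of $D:=F-M_\star$ just to the right of the corresponding junction. Write $r:=\frac{4\pi}{\pi^2+4}$ for the value of $\frac{\pi(\vk_n-\vk_{n-1})}{1-2\vk_{n-1}}$, which is constant in $n$ by \eqref{help}, and recall from \eqref{xn} that $1-2\vk_n=q^n$ with $q=\frac{\pi^2-4}{\pi^2+4}$. A one-line computation, using $(\pi^2+4)^2-16\pi^2=(\pi^2-4)^2$, yields the identity $\sqrt{1-r^2}=q=\frac{1-2\vk_n}{1-2\vk_{n-1}}$; this is precisely the relation behind the continuous differentiability of $M_\star$ established in Proposition~\ref{PropM}. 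Since $\mu_0=\vk_0=0$, if the sequences do not coincide there is a smallest index $m\geq1$ with $\mu_m\neq\vk_m$, and for $j\le m-1$ the partitions agree, so $F$ and $M_\star$ are given by the identical arcsin-piece based at $\vk_{m-1}$ (with the same accumulated constant $\tfrac{m-1}{2}\arcsin r$) and hence coincide on $[0,\min(\mu_m,\vk_m))$. It therefore suffices to compare the two functions immediately past that common endpoint, splitting into the cases $\mu_m<\vk_m$ and $\mu_m>\vk_m$.

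In the case $\mu_m<\vk_m$ the path $F$ opens its $m$-th summand at $\mu_m$ while $M_\star$ keeps following the $(m-1)$-th piece, so $D(\mu_m)=0$ and I would examine $D'(\mu_m^+)$. Writing $s:=\frac{\pi(\mu_m-\vk_{m-1})}{1-2\vk_{m-1}}\in(0,r)$ and using $1-2\mu_m=(1-2\vk_{m-1})\bigl(1-\tfrac{2s}{\pi}\bigr)$, the inequality $F'(\mu_m^+)>M_\star'(\mu_m^+)$ reduces, after clearing denominators, to
\[
\sqrt{1-s^2}>1-\frac{2s}{\pi},
\]
which upon squaring (both sides being positive) is equivalent to $s<r$ and therefore holds strictly. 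Hence $D'(\mu_m^+)>0$ with $D(\mu_m)=0$, so $D>0$ on a right-neighbourhood $(\mu_m,\mu_m+\varepsilon)$, which is the sought interval $\cF$.

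The genuinely delicate case is $\mu_m>\vk_m$, and I expect this to be the main obstacle. Now it is $M_\star$ that opens a new piece at $\vk_m$ while $F$ continues on the piece based at $\vk_{m-1}$, whose arcsin-argument equals exactly $r$ at $x=\vk_m$. Because of the identity $\sqrt{1-r^2}=\frac{1-2\vk_m}{1-2\vk_{m-1}}$ — i.e. the $C^1$-property of $M_\star$ — one finds not only $D(\vk_m)=0$ but also $D'(\vk_m)=0$, so the first-order test is inconclusive and I must pass to second order. This is exactly where the advantage of the $\vk$-sequence surfaces: the fresh $M_\star$-piece has argument $0$ at its left endpoint, and since $\arcsin$ has an inflection there, $M_\star''(\vk_m^+)=0$; whereas $F$ sits at the strictly positive argument $r$ and hence has strictly positive curvature, giving $D''(\vk_m^+)=F''(\vk_m)-M_\star''(\vk_m^+)>0$. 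Together with $D(\vk_m)=D'(\vk_m)=0$ and the continuity of $D''$ on a right-neighbourhood, this forces $D>0$ on some $(\vk_m,\vk_m+\varepsilon)$, again producing an interval $\cF$.

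What remains is bookkeeping that I would dispatch quickly: in each case $\varepsilon$ can be taken small enough that both functions are evaluated on a single analytic arcsin-piece — here one uses $\mu_{m-1}=\vk_{m-1}<\min(\mu_m,\vk_m)$ together with the strict monotonicity \eqref{xn12} of $\{\vk_n\}$ — so that all arguments stay in $[0,1)$ and the derivatives $D'$, $D''$ exist and are continuous where needed. Finally $\cF\subset(0,\mu_{\rm sup})$ holds because in Case A one has $\mu_m<\mu_{\rm sup}$, while in Case B one has $\vk_m<\mu_m\le\mu_{\rm sup}$, so $F$ is defined on the chosen interval. The two local expansions above then complete the argument.
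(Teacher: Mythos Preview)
Your proof is correct and shares the paper's overall plan: locate the first index $m$ with $\mu_m\neq\vk_m$ and treat the two cases $\mu_m<\vk_m$ and $\mu_m>\vk_m$ separately. In the first case both you and the paper use a first-derivative comparison just to the right of $\mu_m$; the paper carries it out over an explicit subinterval $(\mu_m,\xi_m)$, while you check only $D'(\mu_m^+)>0$, which is enough.

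The second case is where the two arguments diverge. You observe that $D(\vk_m)=D'(\vk_m)=0$ --- the vanishing of $D'$ being exactly the $C^1$-property of $M_\star$ from Proposition~\ref{PropM} --- and then pass to second order, exploiting that the fresh $M_\star$-piece has $\arcsin$-argument $0$ at $\vk_m$ (hence $M_\star''(\vk_m^+)=0$ by the inflection of $\arcsin$) while the $F$-piece sits at argument $r>0$ (hence $F''(\vk_m)>0$). The paper instead stays at first order on the whole interval $(\vk_m,\min\{\vk_{m+1},\mu_m\})$: it computes $F'(x)-M_\star'(x)$ and shows by explicit algebra that the inequality $F'>M_\star'$ is equivalent to $x>\vk_m$, so it holds strictly throughout. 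Your route is more conceptual and makes transparent \emph{why} the sequence $\{\vk_n\}$ is singled out --- it is precisely the partition that renders the piecewise function $C^1$ --- whereas the paper's route is a direct calculation that also delivers an explicit $\cF$.
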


\begin{proof}
First, we remark that if one chooses $\mu_n=\vk_n$, $n=0,1,2\ldots\,,$ where
$\{\vk_n\}_{n=0}^\infty$ is the sequence \eqref{xns}, then  $F$
coincides with $M_\star$. If
$\{\mu_n\}_{n=0}^\infty\neq\{\vk_n\}_{n=0}^\infty$ then there is
$m\in\bbN$ such that $\mu_n=\vk_n$ for all $n<m$ and $\mu_m\neq \vk_m$.
Since, $\mu_{m-1}=\vk_{m-1}$, there are two options: either
\begin{equation}
\label{case-1-1}
\vk_{m-1}<\mu_m<\min\{\vk_m,\mu_{m+1}\}
\end{equation}
or
\begin{equation}
\label{case-1-2}
\vk_m<\mu_m \quad\text{(and \,}\mu_m<\mu_{m+1}).
\end{equation}
By \eqref{Mstar} and \eqref{F1}, \eqref{Fn} in  the case \eqref{case-1-1}
equality $M_\star(x)=F(x)$ holds for all $x\in[0,\mu_m]$. For
$x\in\bigl(\mu_m,\min\{\vk_m,\mu_{m+1}\}\bigr)$ we have
\begin{align*}
M_\star(x)&=(m-1)C_0
+\frac{1}{2}\arcsin\left(\frac{\pi(x-\vk_{m-1})}{1-2\vk_{m-1}}\right), \\
F(x)&=
(m-1)C_0
+\frac{1}{2}\arcsin\left(\frac{\pi(\mu_m-\vk_{m-1})}{1-2\vk_{m-1}}\right)
+\frac{1}{2}\arcsin\left(\frac{\pi(x-\mu_m)}{1-2\mu_m}\right),
\end{align*}
where $C_0$ is given by \eqref{C0}. Having explicitly computed the
derivatives of $M_\star(x)$ and $F(x)$ for
$x\in\bigl(\mu_1,\min\{\vk_m,\mu_{m+1}\bigr)$, one obtains that
$F'(x)>M'_\star(x)$ whenever $ \mu_m<x<\xi_m$, where
$$
\xi_m=\min\left\{\mu_{m+1},\mbox{$\frac{1}{2\pi^2}
[4+(\pi^2-4)(\vk_{m-1}+\mu_m)]$}\right\}.
$$
Notice that $\xi_m>\mu_m$ since $\mu_{m+1}>\mu_m$ and
\begin{align*}
\frac{1}{2\pi^2}
[4+(\pi^2-4)(\vk_{m-1}+\mu_m)]-\mu_m&
=\frac{\pi^2+4}{2\pi^2}\left(\frac{4}{\pi^2+4}+\frac{\pi^2-4}{\pi^2+4}\vk_{m-1}-\mu_m\right)\\
&=\frac{\pi^2+4}{2\pi^2}(\vk_m-\mu_m)\\
&>0
\end{align*}
(see \eqref{xns} and \eqref{case-1-1}). Observing that $M_\star(\mu_m)=F(\mu_m)$,
one concludes that $F(x)>M_\star(x)$ at least for all $x$ from the open
interval $\cF=(\mu_m,\xi_m)$.

If \eqref{case-1-2} holds then $M_\star(x)=F(x)$ for $x\in[0,\vk_m]$.
At the same time, for
\begin{equation}
\label{vkxm}
\vk_m< x<\min\{\vk_{m+1},\mu_m\}
\end{equation}
we have
\begin{align}
\label{eqFM1}
M_\star(x)&=
(m-1)C_0
+\frac{1}{2}\arcsin\left(\frac{\pi(\vk_m-\vk_{m-1})}{1-2\vk_{m-1}}\right)
+\frac{1}{2}\arcsin\left(\frac{\pi(x-\vk_m)}{1-2\vk_m}\right), \\
\label{eqFM2}
F(x)&=(m-1)C_0
+\frac{1}{2}\arcsin\left(\frac{\pi(x-\vk_{m-1})}{1-2\vk_{m-1}}\right).
\end{align}
One verifies by inspection that under condition \eqref{vkxm} the requirement
$F'(x)>M'_\star(x)$ is equivalent to
\begin{align*}
x>&\frac{1}{2\pi^2}[4+(\pi^2-4)(\vk_{m-1}+\vk_m)]\\
&=\frac{\pi^2+4}{2\pi^2}\left(\frac{4}{\pi^2+4}+
\frac{\pi^2-4}{\pi^2+4}\vk_{m-1}+\frac{\pi^2-4}{\pi^2+4}\vk_{m}\right)\\
&=\frac{\pi^2+4}{2\pi^2}\left(\vk_m+\frac{\pi^2-4}{\pi^2+4}\vk_{m}\right)\\
&=\vk_m,
\end{align*}
by taking into account relations \eqref{xns} at the second step.
That is, \eqref{vkxm} implies $F'(x)>M'_\star(x)$. From
$M_\star(\vk_m)=F(\vk_m)$ it then follows that $F(x)>M_\star(x)$ at
least for all $x$ from the open interval
$\cF=(\vk_m,\min\{\vk_{m+1},\mu_m\})$. The proof is complete.
\end{proof}

Finally, we turn to the main result of this work.

\begin{theorem}
\label{ThMain} Given a (possibly unbounded) self-adjoint operator
$A$ on the Hilbert space $\fH$, assume that a Borel set
$\sigma\subset\bbR$ is an isolated component of the spectrum of $A$,
i.e. $\sigma\subset\spec(A)$ and
\begin{equation*}
\dist(\sigma,\Sigma)=d>0,
\end{equation*}
where $\Sigma=\spec(A)\setminus\sigma$. Let $V$ be a
bounded self-adjoint operator on $\fH$ such that
\begin{equation}
\label{Vd12}
\|V\|<{d}/{2}
\end{equation}
and let $L=A+V$ with $\Dom(L)=\Dom(A)$. Then the maximal angle
$\theta(\fA,\fL)$ between the spectral subspaces $\fA=\Ran
\bigl(\sE_A(\sigma)\bigr)$ and $\fL=\Ran\bigl(\sE_L(\omega)\bigr)$
of $A$ and $L$ associated with their respective spectral subsets
$\sigma$ and $\omega=\spec(L)\cap\cO_{\|V\|}(\sigma)$ satisfies the
bound
\begin{equation}
\label{TetaM}
\theta(\fA,\fL)\leq M_\star\left(\frac{\|V\|}{d}\right),
\end{equation}
where the estimating function $M_\star(x)$,
$x\in\bigl[0,\frac{1}{2}\bigr)$, is defined by \eqref{Mstar}.
In particular, if
\begin{equation}
\label{Vcsd}
\|V\|<c_\star\, d,
\end{equation}
where $c_\star$ is given by \eqref{cstar}, then the subspaces $\fA$
and $\fL$ are in the acute-angle case, i.e.
$\theta(\fA,\fL)<\frac{\pi}{2}$.
\end{theorem}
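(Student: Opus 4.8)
The plan is to interpolate between $A$ and $L$ by the one-parameter family $L_t=A+tV$, $t\in[0,1]$, and to split the passage from $\fA$ to $\fL$ into finitely many small steps dictated by the partition \eqref{Union}. Assuming $V\neq0$ (the case $V=0$ is trivial, since then $\fA=\fL$ and $M_\star(0)=0$), set $x=\|V\|/d$ and $n=n_{_\#}(x)$, so that $\vk_n\leq x<\vk_{n+1}$. I would introduce the nodes $t_j=\vk_j d/\|V\|$ for $j=0,1,\dots,n$ together with $t_{n+1}=1$; by \eqref{xn12} they satisfy $0=t_0<t_1<\dots<t_n\leq 1=t_{n+1}$ and $t_j\|V\|=\vk_j d<d/2$ for $j\leq n$, so condition \eqref{dV2} holds for each operator $A+t_jV$. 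Writing $\Gamma(t)=\sE_{L_t}\bigl(\cO_{\|V\|}(\sigma)\bigr)$ as in Theorem \ref{ThGlog} and putting $\fA_j=\Ran\bigl(\Gamma(t_j)\bigr)$, one has $\fA_0=\fA$ and $\fA_{n+1}=\fL$. The whole argument then reduces to bounding the consecutive maximal angles $\theta_j:=\theta(\fA_j,\fA_{j+1})$ and summing them by the triangle inequality of Lemma \ref{L-triangle}.

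For each elementary step I would first invoke the extended Makarov--Seelmann bound \eqref{eq:MSb} with $a=t_j$ and $b=t_{j+1}$, noting that $\theta_j=\arcsin\bigl(\|\Gamma(t_{j+1})-\Gamma(t_j)\|\bigr)$. Since $1-2\vk_j=q^{\,j}$ with $q$ as in \eqref{alq}, one has $d-2t_j\|V\|=d\,q^{\,j}$ for $j\leq n$, while $d-2t_{n+1}\|V\|=d(1-2x)>d\,q^{\,n+1}$ because $x<\vk_{n+1}$. Hence the right-hand side of \eqref{eq:MSb} never exceeds $\frac{\pi}{4}\log(1/q)$, and the elementary inequality $q>1/\re$ yields $\theta_j\leq\frac{\pi}{4}$ at every node. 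This coarse estimate plays a double role: it certifies that each pair $(\fA_j,\fA_{j+1})$ is in the acute-angle case, and it locates $\theta_j$ below $\pi/4$, which is precisely the information required (cf.\ Remark \ref{R-tpi4}) to convert a $\sin2\theta$ inequality into a genuine upper bound on $\theta_j$.

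With the acute-angle case in hand, I would then apply the a priori generic $\sin2\theta$ estimate of Corollary \ref{ThS2Tpr}, treating $L_{t_j}$ as the unperturbed operator, $W_j=(t_{j+1}-t_j)V$ as the perturbation, $\fA_j$ as the spectral subspace of $L_{t_j}$ and $\fA_{j+1}$ as a reducing subspace of $L_{t_{j+1}}=L_{t_j}+W_j$. This gives $\dist(\omega_{t_j},\Omega_{t_j})\,\sin(2\theta_j)\leq\pi\|W_j\|$; combining it with the gap estimate $\dist(\omega_{t_j},\Omega_{t_j})\geq d-2t_j\|V\|=d(1-2\vk_j)$ (as in \eqref{domOmt}) and with $\theta_j\leq\pi/4$, I obtain $\theta_j\leq\frac{1}{2}\arcsin\left(\frac{\pi\|W_j\|}{d(1-2\vk_j)}\right)$. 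The telescoping identity \eqref{help} is decisive here: for $j<n$ one has $\|W_j\|=(\vk_{j+1}-\vk_j)d$ and the ratio collapses to the constant $\frac{4}{\pi^2+4}$, so $\theta_j\leq\frac{1}{2}\arcsin\left(\frac{4\pi}{\pi^2+4}\right)=C_0$; for the final step $\|W_n\|=(x-\vk_n)d$ gives $\theta_n\leq\frac{1}{2}\arcsin\left(\frac{\pi(x-\vk_n)}{1-2\vk_n}\right)$.

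Summing the $n+1$ estimates through Lemma \ref{L-triangle} produces $\theta(\fA,\fL)\leq\sum_{j=0}^{n}\theta_j\leq nC_0+\frac{1}{2}\arcsin\left(\frac{\pi(x-\vk_n)}{1-2\vk_n}\right)$, which is exactly $M_\star(x)$ by \eqref{Mstar}, establishing \eqref{TetaM}. The second assertion is immediate: $\|V\|<c_\star d$ means $x<c_\star$, and the strict monotonicity of $M_\star$ together with $M_\star(c_\star)=\frac{\pi}{2}$ (Proposition \ref{PropM} and Remark \ref{R-cstar}) forces $\theta(\fA,\fL)<\frac{\pi}{2}$, i.e.\ the acute-angle case. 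The main obstacle is the circular-looking dependence of the sharp $\sin2\theta$ step on prior knowledge of the acute-angle case and of the position of $\theta_j$ relative to $\pi/4$; the resolution---and the very reason the unbounded extension in Theorem \ref{ThGlog} was needed---is to feed in the coarser logarithmic bound \eqref{eq:MSb} first, using it solely to guarantee $\theta_j\leq\pi/4$ at each node, and only afterwards to extract the tighter per-step value $C_0$ from Corollary \ref{ThS2Tpr}.
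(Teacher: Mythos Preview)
Your proposal is correct and follows essentially the same route as the paper's proof: both introduce the intermediate operators $L_j=A+t_jV$ with $t_j=\vk_j d/\|V\|$, bound each consecutive angle $\theta(\fL_j,\fL_{j+1})$ by $\frac{1}{2}\arcsin\bigl(4\pi/(\pi^2+4)\bigr)$ via the a priori $\sin2\theta$ estimate, and sum through the triangle inequality of Lemma~\ref{L-triangle}. The only cosmetic difference is in how the prerequisite $\theta_j\leq\pi/4$ is obtained: the paper invokes the packaged Remark~\ref{Rtet-pi4-a} (which internally uses Remark~\ref{R-MSpi4} applied to each $L_j$ with gap $\delta_j$ and perturbation $W_j$, checking $\|W_j\|/\delta_j\leq 4/(\pi^2+4)<c_{\pi/4}$), whereas you apply the path bound \eqref{eq:MSb} directly on $[t_j,t_{j+1}]$ and use $q>1/\re$; these are equivalent formulations of the same inequality.
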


\begin{proof}
Throughout the proof we assume that $\|V\|\neq 0$ and set
\begin{equation}
\label{xVd}
x=\frac{\|V\|}{d}.
\end{equation}
The assumption \eqref{Vd12} implies
$x<\frac{1}{2}$. Hence, there is a number $n\in\bbN\cup\{0\}$ such
that $x\in[\vk_n,\vk_{n+1})$ with $\vk_n$ and $\vk_{n+1}$ the consecutive
elements of the sequence \eqref{xns}.

For $n=0$ the bound \eqref{TetaM} holds by Remark \ref{Rtet-pi4-a}
since in this case $\frac{\|V\|}{d}<\frac{4}{\pi^2+4}<c_{_{\pi/4}}$
and $M_\star(x)=\frac{1}{2}\arcsin(\pi x)$\, (see \eqref{2consts},
\eqref{xns}, and \eqref{Mstar}).

In the case where $n\geq 1$ we introduce the operators
$$
V_j=\vk_j \frac{d}{\|V\|}V\quad\text{and}\quad L_j=A+V_j,\quad\Dom(L_j)=\Dom(A), \quad
j=0,1,\ldots,n,
$$
where $\vk_j$ are elements of the sequence \eqref{xns}. Since $\vk_j<\vk_n$ for
$j<n$ and $\vk_n\leq x$, from \eqref{xVd} it follows that
$$
\|V_j\|=\frac{\vk_j}{x}\|V\|\leq \|V\|<\frac{d}{2}, \quad j=0,1,\dots,n.
$$
Therefore, the spectrum of the (self-adjoint) operator $L_j$
consists of the two disjoint components
$$
\omega_j=\spec(L_j)\cap\cO_{\|V_j\|}(\sigma)\quad\text{and}\quad
\Omega_j=\spec(L_j)\cap\cO_{\|V_j\|}(\Sigma), \quad j=0,1,\dots,n.
$$
Moreover,
\begin{equation}
\label{delj}
\delta_j:=\dist(\omega_j,\Omega_j)\geq d-2\|V_j\|=d(1-2\vk_j), \quad j=0,1,\dots,n.
\end{equation}
By $\fL_j$ we will denote the spectral subspace of $L_j$ associated
with its spectral component $\omega_j$, i.e.
$\fL_j=\Ran\bigl(\sE_{L_j}(\omega_j)\bigr)$. Notice that $L_0=A$,
$\omega_0=\sigma$, and $\fL_0=\fA$.

For $0\leq j\leq n-1$ the operator $L_{j+1}$ may be viewed as a perturbation
of the operator $L_j$, namely
$$
L_{j+1}=L_j+W_j, \quad j=0,1,\dots,n-1,
$$
where
$$
W_j:=V_{j+1}-V_j=(\vk_{j+1}-\vk_j)\frac{d}{\|V\|}V.
$$
Similarly, we write
$$
L=L_n+W, \quad\text{with \,\,\,} W:=V-V_n=(x-\vk_n)\frac{d}{\|V\|}V.
$$
One easily verifies that
\begin{align}
\label{omegaj1}
\omega_{j+1}&=\spec(L_{j+1})\cap\cO_{\|W_j\|}(\omega_j),\,\,
j=0,1,\ldots,\mbox{$n-1$},\\
\label{sigmap}
\omega&=\spec(L)\cap\cO_{\|W\|}(\omega_n).
\end{align}
By taking into account first \eqref{delj} and
then \eqref{help}, one observes that
\begin{align}
\label{Wjn}
\frac{\|W_j\|}{\delta_j}&=\frac{(\vk_{j+1}-\vk_j)d}{\delta_j}
\leq\frac{\vk_{j+1}-\vk_j}{1-2\vk_j}=\frac{4}{\pi^2+4},\quad j=0,1,\dots,n-1,
\end{align}
and
\begin{align}
\label{Wn} \frac{\|W\|}{\delta_n}&=\frac{(x-\vk_n)d}{\delta_n}
\leq\frac{x-\vk_n}{1-2\vk_n}\qquad
\left(<\frac{\vk_{n+1}-\vk_n}{1-2\vk_n}=\frac{4}{\pi^2+4}\right).
\end{align}
Recall that $\frac{4}{\pi^2+4}<c_{_{\pi/4}}$ (see \eqref{cpi4} and
\eqref{2consts}). Thus, by \eqref{omegaj1}--\eqref{Wn} Remark
\ref{Rtet-pi4-a} applies to any of the pairs $(\fL_j,\fL_{j+1})$,
$j=0,1,\ldots,n-1$, and $(\fL_n,\fL)$, which means that
\begin{align}
\label{tetjj1}
\theta(\fL_j,\fL_{j+1})&\leq
\frac{1}{2}\arcsin\left(\frac{\pi\|W_j\|}{\delta_j}\right)
\leq\frac{1}{2}\arcsin\left(\frac{4\pi}{\pi^2+4}\right),\quad j=0,1,\dots,n-1,\\
\label{tetnL}
\theta(\fL_n,\fL)&\leq
\frac{1}{2}\arcsin\left(\frac{\pi\|W\|}{\delta_n}\right)
\leq\frac{1}{2}\arcsin\left(\frac{\pi(x-\vk_n)}{1-2\vk_n}\right).
\end{align}
Meanwhile, by using the triangle inequality for maximal angles
between subspaces (see Lem\-ma~\ref{L-triangle}) one obtains, step by step,
\begin{alignat}{2}
\nonumber
\theta(\fL_0,\fL)&\leq \theta(\fL_0,\fL_1)+\theta(\fL_1,\fL)
&(\text{if }n\geq 1)\,\,\,\\
\nonumber
&\leq \theta(\fL_0,\fL_1)+\theta(\fL_1,\fL_2)+\theta(\fL_2,\fL)
\qquad\qquad   &(\text{if }n\geq 2)\,\,\,\\
\nonumber
&\leq \quad \cdots  & \cdots\qquad \\
\label{trianglen}
&\leq \sum\limits_{j=0}^{n-1}
\theta(\fL_j,\fL_{j+1})+\theta(\fL_n,\fL) & (\text{if }n\geq 1).
\end{alignat}
Combining \eqref{trianglen} with \eqref{tetjj1} and \eqref{tetnL}
results just in the bound \eqref{TetaM}, taking into account
equality $\fL_0=\fA$ and the definitions \eqref{xVd} of $x$ and
\eqref{Mstar} of $M_\star(x)$. Finally, by Proposition \ref{PropM}
and Remark \ref{R-cstar} one concludes that under condition
\eqref{Vcsd} the subspaces $\fA$ and $\fL$ are in the acute-angle
case. This completes the proof.
\end{proof}

\begin{figure}
\begin{center}
{\includegraphics[angle=0,width=10.cm]{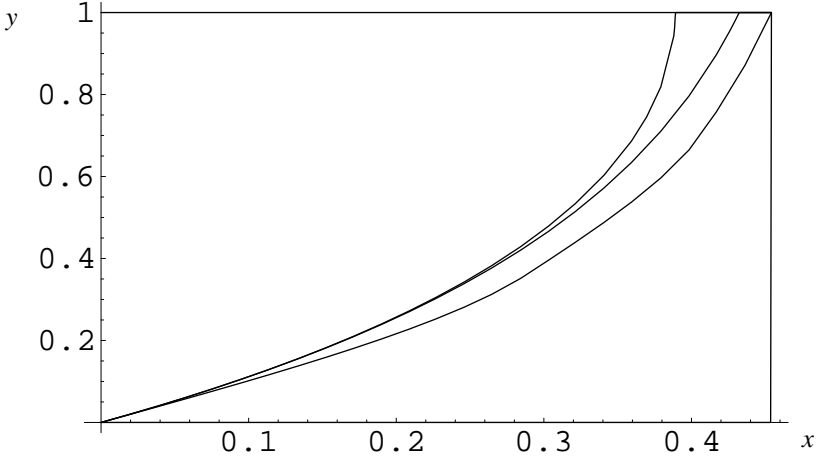}}
\end{center}

\vspace*{-6.3cm}
\hspace*{-10.5cm} $y$
\vspace*{5.2cm}

\hspace*{10.5cm}$x$

\bigskip

\caption{Graphs of the functions $\frac{2}{\pi}M_{\rm KMM}(x)$,
$\frac{2}{\pi}M_{\rm MS}(x)$, and $\frac{2}{\pi}M_\star(x)$ while
their values do not exceed 1. The upper curve depicts the graph
of $\frac{2}{\pi}M_{\rm KMM}(x)$ for $x\leq c_{_{\rm KMM}}$, the
intermediate curve represents the graph of $\frac{2}{\pi}M_{\rm
MS}(x)$ for $x\leq c_{_{\rm MS}}$, and the lower curve is the graph of
$\frac{2}{\pi}M_\star(x)$ for $x\leq c_\star$.} \label{figure1}
\end{figure}

\begin{remark}
\label{R-final} Recall that the previously known estimating
functions for $\theta(\fA,\fL)$ are those of references \cite{KMM1}
and \cite{MakS10}, namely the functions $M_{_{\rm
KMM}}\left(\frac{\|V\|}{d}\right)$ (see Remark \ref{R-KMMbound}) and
$M_{_{\rm MS}}\left(\frac{\|V\|}{d}\right)$ (see Remark
\ref{R-MSbound}). One verifies by inspection that the derivatives
$M'_{_{\rm KMM}}(x)$, $M'_{_{\rm MS}}(x)$, and $M'_\star(x)$ of the estimating
functions $M_{_{\rm KMM}}(x)$, $M_{_{\rm MS}}(x)$, and $M_\star(x)$
possess the following properties
\begin{align}
\label{MLM1}
M'_{_{\rm MS}}(x)&<M'_{_{\rm KMM}}(x)\quad\text{for all \,}x\in(0,c_{_{\rm KMM}}),\\
\label{MLM2}
M'_\star(x)&<M'_{_{\rm MS}}(x) \text{\,\, for all \,}
x\in\mbox{$\bigl[0,\frac{1}{2}\bigr)$}\setminus\{\vk_n\}_{n=0}^\infty,\\
\label{MLM3}
M'_\star(\vk_n)&=M'_{_{\rm MS}}(\vk_n),\quad n=0,1,2,\ldots\,,
\end{align}
where $c_{_{\rm KMM}}$ is given by \eqref{cKMM} and $\{\vk_n\}_{n=0}^\infty$ is
the sequence \eqref{xns}. Since
$$
M_{_{\rm KMM}}(0)=M_{_{\rm MS}}(0)=M_\star(0)=0,
$$
from \eqref{MLM1}--\eqref{MLM3} it follows that
\begin{equation*}
M_\star(x)<M_{_{\rm MS}}(x)<M_{_{\rm KMM}}(x)\quad
\text{for all \,}x\in(0,c_{_{\rm KMM}}]
\end{equation*}
and
\begin{equation*}
M_\star(x)<M_{_{\rm MS}}(x)\quad
\text{for all \,}x\in\mbox{$\bigl[c_{_{\rm KMM}},\frac{1}{2}\bigr)$}.
\end{equation*}
Thus, the bound \eqref{TetaM} is stronger than both the previously
known bounds \cite{KMM1,MakS10} for $\theta(\fA,\fL)$, in particular
it is stronger than the best of them, the bound \eqref{MSbound},
established in \cite{MakS10}.

For convenience of the reader, the graphs of the estimating
functions $M_\star(x)$, $M_{_{\rm MS}}(x)$, and $M_{_{\rm KMM}}(x)$,
all the three divided by \mbox{\small$\pi/2$}, are plotted
in Fig. \ref{figure1}. Plotting of the function
$\frac{2}{\pi}\,M_{_{\rm KMM}}(x)$ is naturally restricted to its
domain $[0,c_{_{\rm KMM}}]$. The functions $M_\star(x)$ and
$M_{_{\rm MS}}(x)$ are plotted respectively for $x\in[0,c_\star]$
and $x\in[0,c_{_{\rm MS}}]$ where $c_\star$ is given by
\eqref{cstar} and $c_{_{\rm MS}}$ by \eqref{cMS}.
\end{remark}

We conclude this section with an a posteriori result that is an
immediate corollary to The\-o\-rem~\ref{ThMain}.

\begin{theorem}
\label{ThMApo} Assume that $A$ and $V$ are self-adjoint operators on the
Hilbert space $\fH$. Let $V\in\cB(\fH)$ and $L=A+V$ with
$\Dom(L)=\Dom(A)$. Assume, in addition, that $\omega$ is an isolated
component of the spectrum of $L$, i.e.
$\dist(\omega,\Omega)=\delta>0$, where
$\Omega=\spec(L)\setminus\omega$, and suppose that $\|V\|<\delta/2$. Then the
maximal angle $\theta(\fA,\fL)$ between the spectral subspaces
$\fA=\Ran \bigl(\sE_A(\sigma)\bigr)$ and
$\fL=\Ran\bigl(\sE_L(\omega)\bigr)$ of $A$ and $L$ associated with
their respective spectral components
\mbox{$\sigma=\spec(A)\cap\cO_{\|V\|}(\omega)$} and $\omega$
satisfies the bound
\begin{equation}
\label{TetaMApo}
\theta(\fA,\fL)\leq M_\star\left(\frac{\|V\|}{\delta}\right)
\end{equation}
with $M_\star$ given by \eqref{Mstar}. In particular, if
$\|V\|<c_\star\, \delta$ where $c_\star$ is given by \eqref{cstar},
the subspaces $\fA$ and $\fL$ are in the acute-angle case.
\end{theorem}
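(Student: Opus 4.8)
The plan is to derive this a posteriori bound from the a priori Theorem~\ref{ThMain} simply by interchanging the roles of the two operators. The key observation is that the maximal angle $\theta(\fA,\fL)$ is symmetric in its arguments (Definition~\ref{D-maxangle} and Remark~\ref{R-maxangle}), so the ordered pairs $(A,L)$ and $(L,A)$ produce the \emph{same} quantity on the left-hand side of the estimate. This is exactly the device already employed in passing from Theorem~\ref{ThS2Tpo} to Corollary~\ref{ThS2Tpr}.

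First I would view $A$ as a bounded self-adjoint perturbation of $L$. Setting $W:=-V$, one has $A=L+W$ with $\Dom(A)=\Dom(L)$ and $\|W\|=\|V\|$. Under the present hypotheses the set $\omega$ is an isolated component of $\spec(L)$ with $\dist(\omega,\Omega)=\delta>0$ and $\|W\|=\|V\|<\delta/2$, so all the assumptions of Theorem~\ref{ThMain} are satisfied once $L$ takes the role of the unperturbed operator $A$ there, $\omega$ takes the role of $\sigma$, $\delta$ takes the role of $d$, and $W$ takes the role of $V$.

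Next I would identify the spectral sets. In this application of Theorem~\ref{ThMain} the perturbed isolated spectral set is $\spec(A)\cap\cO_{\|W\|}(\omega)=\spec(A)\cap\cO_{\|V\|}(\omega)$, which is precisely the set $\sigma$ of the present theorem, with associated spectral subspace $\fA=\Ran\bigl(\sE_A(\sigma)\bigr)$; the unperturbed isolated set $\omega$ carries the spectral subspace $\fL=\Ran\bigl(\sE_L(\omega)\bigr)$. Theorem~\ref{ThMain} therefore yields
$$
\theta(\fL,\fA)\leq M_\star\left(\frac{\|W\|}{\delta}\right)
=M_\star\left(\frac{\|V\|}{\delta}\right).
$$
Invoking the symmetry $\theta(\fA,\fL)=\theta(\fL,\fA)$ gives \eqref{TetaMApo}. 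Finally, the acute-angle conclusion under $\|V\|<c_\star\,\delta$ follows from the equality $M_\star(c_\star)=\frac{\pi}{2}$ together with the strict monotonicity of $M_\star$ (Proposition~\ref{PropM} and Remark~\ref{R-cstar}), exactly as at the end of the proof of Theorem~\ref{ThMain}.

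The only point requiring care — and the step I expect to be the main, if minor, obstacle — is the bookkeeping of which spectral set is attached to which operator after the swap. One must check that the component produced by Theorem~\ref{ThMain} as the ``perturbed'' set, namely $\spec(A)\cap\cO_{\|V\|}(\omega)$, coincides with the $\sigma$ fixed in the statement, and, crucially, that the separation hypothesis feeding Theorem~\ref{ThMain} is supplied here by the gap $\delta$ of $\spec(L)$ rather than by any gap in $\spec(A)$; indeed no isolation of $\sigma$ in $\spec(A)$ is assumed, which is precisely what makes the statement \emph{a posteriori}. Once this matching is confirmed, the argument is immediate.
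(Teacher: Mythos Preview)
Your proposal is correct and follows exactly the paper's own route: write $A=L+W$ with $W=-V$ and apply Theorem~\ref{ThMain} with the roles of the two operators swapped, just as in the passage from Theorem~\ref{ThS2Tpo} to Corollary~\ref{ThS2Tpr}. Your exposition is simply more detailed than the paper's two-line proof, and the bookkeeping you flagged (that $\sigma=\spec(A)\cap\cO_{\|V\|}(\omega)$ is precisely the perturbed component produced by Theorem~\ref{ThMain}) is handled correctly.
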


\begin{proof}
Do exactly the same step as we did in the proof of Corollary \ref{ThS2Tpr}:
Represent $A$ as $A=L+W$ with $W=-V$. Then the assertion follows from
Theorem \ref{ThMain}.
\end{proof}

\begin{remark}
As usually, let $\Sigma=\spec(A)\setminus\sigma$ and
$d=\dist(\sigma,\Sigma)$. Suppose that both the `a priori' and `a
posteriori' distances $d$ and $\delta$ are known. Then, depending on
which of the distances $d$ and $\delta$ is larger, one may choose
among the bounds \eqref{TetaM} and \eqref{TetaMApo} the stronger one:
\begin{equation*}
\theta(\fA,\fL)\leq M_\star\left(\frac{\|V\|}{\max\{d,\delta\}}\right).
\end{equation*}
\end{remark}

\section{Quantum harmonic oscillator under a bounded perturbation}
\label{SecExHO}

In this section we apply the results of the previous section to the
$N$-dimensional isotropic quantum harmonic oscillator under a
bounded self-adjoint perturbation.

Let $\fH=L_2(\bbR^N)$ for some $N\in\bbN$. Under the assumption
that the units are chosen such that the reduced Planck constant, mass
of the particle, and the angular frequency are all equal to one,
the Hamiltonian of the isotropic quantum harmonic oscillator is
given by
\begin{equation}
\label{Aho}
\begin{array}{l}
(Af)(x)=-\frac{1}{2}\Delta f(x)+\frac{1}{2}|x|^2f(x),\\[0.5em]
\dom(A)=\biggl\{f\in W^2_2(\bbR^N)\,\,\biggl|\,\, \displaystyle\int_{\bbR^N}
dx\; |x|^4|f(x)|^2<\infty\biggr\},
\end{array}
\end{equation}
where $\Delta$ is the Laplacian and $W_2^2(\bbR^N)$ denotes
the Sobolev space of $L_2(\bbR^N)$-functions that have their
second partial derivatives in $L_2(\bbR^N)$.

It is well known that the Hamiltonian $A$ is a self-adjoint operator
in $L_2(\bbR^N)$. Its spectrum consists of eigenvalues of
the form
\begin{equation}
\label{lamN}
 \lambda_n = n+ N/2, \quad n=0,1,2, \dots,
\end{equation}
whose multiplicities $m_n$ are given by the binomial coefficients
(see, e.g., \cite{LJVdJ2008} and references therein)
\begin{equation}
\label{mun}
m_n=\left(\begin{array}{c}N+n-1\\n\end{array}\right), \quad n=0,1,2, \dots.
\end{equation}
For $n$ even, the corresponding eigenfunctions $f(x)$ are
symmetric with respect to space reflection $x\,\mapsto-x$
(i.e.\ $f(-x)=f(x)$). For $n$ odd, the
eigenfunctions are anti-symmetric (i.e.\
$f(-x)=-f(x)$). Thus, if one partitions the spectrum
$\spec(A) = \sigma\cup\Sigma$ into the two parts
$$
\sigma=
\{n+N/2\,\,\bigl|\,\, n=0,2,4,\dots\} \quad\text{and}\quad
\Sigma=
\{n+N/2\,\,\bigl|\,\,n=1,3,5\ldots\},
$$
then the complementary subspaces
\begin{equation}
\label{Hho} \fA=L_{2,\textrm{even}}(\bbR^N), \quad
\fA^\perp=L_{2,\textrm{odd}}(\bbR^N)
\end{equation}
of symmetric and anti-symmetric functions are the
spectral subspaces of $A$ corresponding to the spectral components
$\sigma$ and $\Sigma$, respectively. Clearly,
$$
d=\dist(\sigma,\Sigma)=1.
$$

Let $V$ be an arbitrary bounded self-adjoint operator
on $L_2(\bbR^N)$ such that
\begin{equation}
\label{dV2H}
\|V\|<1/2.
\end{equation}
The perturbed oscillator
Hamiltonian $L=A+V$, $\Dom(L)=\Dom(A)$, is self-adjoint and its
spectrum remains discrete. Moreover, the closed $\|V\|$-neighborhood
$\cO_{\|V\|}(\lambda_n )$ of the eigenvalue \eqref{lamN} of $A$
contains exactly $m_n$ eigenvalues $\lambda'_{n,k}$,
$k=1,2,\ldots,m_n$, of $L$, counted with multiplicities, where $m_n$
is given by \eqref{mun} (see, e.g., \cite[Section V.4.3]{Kato}).

Further, assume that the following stronger condition holds:
\begin{equation}
\label{VcsH}
\|V\|<c_\star,
\end{equation}
where $c_\star$ is given by \eqref{cstar}. Let $\fL$ be the spectral
subspace of the perturbed Hamiltonian $L$ associated with the
spectral subset $\omega=\spec(L)\cap\cO_{\|V\|}(\sigma)$. Theorem
\ref{ThMain} ensures that under condition \eqref{VcsH} the
unperturbed and perturbed spectral subspaces $\fA$ and $\fL$ are in
the acute-angle case. Moreover, the maximal angle $\theta(\fA,\fL)$
between these subspaces satisfies the bound
\begin{equation}
\label{tALM}
\theta(\fA,\fL)\leq M_\star(\|V\|),
\end{equation}
where $M_\star$ stands for the function given by \eqref{Mstar}.

Obviously, under condition \eqref{VcsH} the orthogonal complement
$\fL^\perp$ is the spectral subspace of $L$ associated with the
spectral set $\Omega=\spec(L)\cap\cO_{\|V\|}(\Sigma)$. By Remark
\ref{R-tPQperp}, $\theta(\fA^\perp,\fL^\perp)=\theta(\fA,\fL)$.
Hence the maximal angle between the subspaces $\fA^\perp$ and
$\fL^\perp$ satisfies the same bound \eqref{tALM}, i.e.
$\theta(\fA^\perp,\fL^\perp)\leq M_\star(\|V\|)$. Surely, this can
also be seen directly from Theorem \ref{ThMain}.

{\small
\vspace*{2mm} \noindent {\bf Acknowledgments.} This work was
supported by the Deutsche For\-sch\-ungs\-gemeinschaft, by the
Heisenberg-Landau Program, and by the Russian Foundation for Basic
Research. A.\,K.\,Mo\-to\-vi\-lov gratefully acknowledges the kind
hospitality of the Institut f\"ur Angewandte Mathematik,
Universit\"at Bonn, where a part of this research has been
conducted.
}


\end{document}